\documentclass[11pt]{article}
\usepackage{amsfonts,amsmath,amssymb,amsthm, amscd, subcaption}
\usepackage{graphicx}
\setlength{\textwidth}{6.5in}   
\setlength{\textheight}{9in}
\setlength{\evensidemargin}{0in}
\setlength{\oddsidemargin}{0in}
\setlength{\topmargin}{-.5in}

\numberwithin{equation}{section}

\newtheorem{theorem}{Theorem}[section]
\newtheorem{prop}[theorem]{Proposition}
\newtheorem{lemma}[theorem]{Lemma}
\newtheorem{cor}[theorem]{Corollary}

\theoremstyle{definition}
\newtheorem{definition}[theorem]{Definition}
\newtheorem{example}[theorem]{Example}
\newtheorem{remark}[theorem]{Remark}




\def\<{{\langle}}
\def\>{{\rangle}}
\def\G{{\Gamma}}

\def\g{{\gamma}}
\def\d{{\delta}}
\def\Z{\mathbb Z}
\def\Q{\mathbb Q}
\def\R{\mathbb R}
\def\T{{\mathbb T}}
\def\S{{\mathbb S}}
\def\N{{\mathbb N}}

\def\Co{\mathbb C}

\def\l{\lambda}
\def\si{\sigma}
\def\t{\tau}

\def\w{\omega}
\def\Gr{\Bbb G}

\def\La{\Lambda}

\def\Rd{{\cal R}_d}
\def\s{{\bf s}}

\def\De{{\Delta}}
\def\o{\overline}

\def\ni{\noindent} 

\begin{document}

\title{Spanning Trees and Mahler Measure}

\author{Daniel S. Silver 
\and Susan G. Williams\thanks {The authors are partially supported by the Simons Foundation.} }

\maketitle 


\begin{abstract} The \emph{complexity} of a finite connected graph is its number of spanning trees; for a non-connected graph it is the product of complexities of its connected components. If $G$ is an infinite graph with cofinite free $\Z^d$-symmetry, then the logarithmic Mahler measure $m(\De)$ of its Laplacian polynomial $\De$ is the exponential growth rate of the complexity of finite quotients of $G$. It is bounded below by $m(\De(\Gr_d))$, where $\Gr_d$ is the grid graph of dimension $d$. The growth rates $m(\De(\Gr_d))$ are asymptotic to $\log 2d$ as $d$ tends to infinity. If $m(\De(G))\ne 0$, then $m(\De(G)) \ge \log 2$.  \bigskip

MSC: 05C10, 37B10, 57M25, 82B20
\end{abstract}

\section{Introduction.} \label{Intro} Efforts to enumerate spanning trees of finite graphs can be traced back at least as far as 1860, when Carl Wilhelm  Borchardt used determinants to prove that $n^{n-2}$ is the  number of spanning trees in a complete graph on $n$ vertices.\footnote{The formula is  attributed to Arthur Cayley, who wrote about the formula, crediting Borchardt, in 1889.}  The number of spanning trees of a graph, denoted here by $\t(G)$, is often called the \emph{complexity} of $G$. 

When the graph $G$ is infinite one can look for a sequence of  finite graphs $G_j, \ j \in \N$, that approximate $G$. Denoting by $|V(G_j)|$ the number of vertices of $G_j$,   a measure of asymptotic complexity for $G$ is provided by the limit:$$\displaystyle \limsup_{j \to \infty} \frac{1}{|V(G_j)|} \log \t(G_j).$$

Computing such limits has been the goal of many papers  (\cite{BP93, Gr76, Ke12, Ly05, SW00, Wu77} are just a few notable examples). Combinatorics combined with analysis are the customary tools. However, the integral formulas found are familiar also to those who work with algebraic dynamical systems \cite{LSW90,  Sc95}.

When the graph $G$ admits a cofinite free $\Z^d$-symmetry (see definition below), a precise connection with algebraic dynamics was made in \cite{LSW14}. For such graphs a finitely generated ``coloring module" over the ring of Laurent polynomials $\Z[x_1^{\pm 1}, \ldots, x_d^{\pm 1}]$ is defined. It is presented by a square matrix with nonzero determinant $\De(G)$. The polynomial $\De(G)$ has appeared previously (see \cite{Ly05}). The logarithmic Mahler measure $m(\De(G))$ arises now as the topological entropy of the corresponding $\Z^d$-action on the Pontryagin dual of the coloring module. The main significance for us is that $m(\De(G))$ determines the asymptotic complexity of $G$. This characterization was previously shown for connected graphs, first by R. Solomyak \cite{So98} in the case where the vertex set is $\Z^d$ and then for more general vertex sets  by R. Lyons \cite{Ly05}.

We present a number of results, many of them new,  about asymptotic complexity from the perspective of algebraic dynamics and Mahler measure. Where possible we review the relevant ideas.  \\

\ni {\bf Acknowledgements.} It is the authors' pleasure to thank Abhijit Champanerkar, Matilde Lalin and Chris Smythe for helpful comments and suggestions. 

\section{Spanning trees of finite graphs.} 
\begin{definition} \label{complexity} Let $G$ be a finite graph. We denote by $\t(G)$ the number of spanning trees of $G$. When $G$ is connected, $\t(G)$ is often called the \emph{complexity} of $G$. For a finite graph $G$ with connected components $G_1, \ldots, G_\mu$, we define the {\emph complexity} $T(G)$ to be the product $\t(G_1) \cdots \t(G_\mu)$.  \end{definition}

Upper bounds for $\t(G)$ are known. For example, there is the following theorem of \cite{Gr76}.

\begin{theorem}\label{upperbound} If $G=(V,E)$ is a finite connected graph with vertex and edge sets $V$ and $E$, respectively, then $$\t(G) \le \bigg( \frac{2|E|-\d}{|V|-1}\bigg)^{|V|-1},$$
where $\d$ is the maximum degree of $G$.  \end{theorem}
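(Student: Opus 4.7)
The plan is to combine Kirchhoff's Matrix--Tree Theorem with Hadamard's inequality for positive semi-definite matrices and the AM--GM inequality, exploiting the freedom to choose which row and column of the Laplacian we delete.

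First I would invoke the Matrix--Tree Theorem in the form $\t(G) = \det L_0$, where $L = D - A$ is the graph Laplacian of $G$ and $L_0$ is the principal submatrix of $L$ obtained by deleting the row and column indexed by an arbitrary vertex $v_0 \in V$. Because $G$ is connected, $L_0$ is positive definite, and its diagonal entries are the degrees $\deg(v)$ for $v \ne v_0$.

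Next, I would apply Hadamard's inequality for positive semi-definite matrices, giving $\det L_0 \le \prod_{v \ne v_0} \deg(v)$. Then AM--GM yields
$$\prod_{v \ne v_0} \deg(v) \le \left(\frac{1}{|V|-1}\sum_{v \ne v_0} \deg(v)\right)^{|V|-1} = \left(\frac{2|E|-\deg(v_0)}{|V|-1}\right)^{|V|-1},$$
where I use the handshake identity $\sum_{v \in V}\deg(v)=2|E|$.

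Finally, I would choose $v_0$ to be a vertex of maximum degree $\d$, which minimizes the right-hand side and delivers the stated bound. Each individual step is classical and routine; the only point worth flagging is that the freedom in selecting $v_0$ in Kirchhoff's theorem (all cofactors of $L$ agree) is what lets us subtract the \emph{largest} degree rather than some arbitrary one. There is no serious obstacle.
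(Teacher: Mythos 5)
Your argument is correct and complete. Note, however, that the paper does not prove this statement at all: it is quoted as a known result of Grimmett \cite{Gr76}, so there is no in-paper proof to compare against. Your route --- Kirchhoff in the cofactor form $\tau(G)=\det L_0$, Hadamard's inequality $\det L_0\le\prod_{v\ne v_0}\deg(v)$ for the positive definite reduced Laplacian, AM--GM, and then the free choice of the deleted vertex $v_0$ to subtract the largest degree --- is sound at every step and delivers exactly the stated bound, including the $2|E|-\delta$ numerator. It is worth observing that the classical argument in \cite{Gr76} instead uses the eigenvalue form of the Matrix--Tree Theorem (the $d=0$ case of Corollary \ref{product} in this paper), namely $\tau(G)=\frac{1}{|V|}\prod_{i=2}^{|V|}\lambda_i$ with $\sum_{i\ge 2}\lambda_i=\mathrm{tr}(L)=2|E|$, and applies AM--GM to the nonzero Laplacian eigenvalues; that route yields $\tau(G)\le\frac{1}{|V|}\bigl(\frac{2|E|}{|V|-1}\bigr)^{|V|-1}$, which carries a $1/|V|$ prefactor but no $-\delta$ correction (and is tight for complete graphs), whereas your Hadamard-based route is what naturally produces the $-\delta$ in the numerator as stated here. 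The two bounds are incomparable in general, but your proof establishes precisely the inequality asserted in the theorem, so there is nothing to fix.
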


The complexity of a finite graph $G$ can be computed recursively using deletion and contraction of edges. The following is well known. A short proof can be found, for example, on page 282 of \cite{GR01}. 

\begin{prop}\label{recursive} If $G$ is a finite connected graph and $e$ is a non-loop edge, then $$\t(G) = \t(G\setminus e) + \t(G\big / e).$$ 
 \end{prop}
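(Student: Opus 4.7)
The plan is the classical bijective argument: partition the set of spanning trees of $G$ according to whether they contain the distinguished edge $e$, and identify each class with the spanning trees of one of the two smaller graphs $G\setminus e$ and $G/e$.

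First I would observe that any spanning tree $T$ of $G$ with $e\notin T$ is, as a subgraph, also a subgraph of $G\setminus e$, and conversely any spanning tree of $G\setminus e$ is a spanning tree of $G$ not containing $e$ (the vertex set is unchanged, and a spanning tree is determined by being a maximal acyclic connected subgraph on the full vertex set). This identification is visibly a bijection, so the number of spanning trees of $G$ avoiding $e$ equals $\tau(G\setminus e)$.

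Next, for the spanning trees containing $e$, I would set up the bijection $T\mapsto T/e$ to spanning trees of $G/e$. Here the hypothesis that $e$ is a non-loop is essential: contraction of $e$ identifies its two distinct endpoints into a single vertex, so $|V(G/e)|=|V(G)|-1$, and if $T$ is a tree on $|V(G)|$ vertices containing $e$, then $T/e$ has $|V(G)|-1$ vertices and $|V(G)|-2$ edges, is connected (as the image of a connected graph under the quotient), and is acyclic (any cycle in $T/e$ would lift to a cycle in $T$ through $e$, contradicting that $T$ is a tree). Thus $T/e$ is a spanning tree of $G/e$. The inverse map sends a spanning tree $T'$ of $G/e$ to $T'\cup\{e\}$ in $G$, which one checks is again a spanning tree by the same edge-count-plus-connectedness criterion.

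The main step requiring care is the contraction direction, specifically verifying that the lifted subgraph $T'\cup\{e\}$ remains acyclic in $G$: a cycle in $G$ living inside $T'\cup\{e\}$ would either avoid $e$ (and thus project to a cycle in $T'\subset G/e$) or traverse $e$ once and otherwise lie in $T'$ (and project to a closed walk in $T'$ which, since $T'$ is a tree, must be trivial, contradicting that $e$ is not a loop). Adding the two counts gives $\tau(G)=\tau(G\setminus e)+\tau(G/e)$.
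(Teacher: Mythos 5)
Your proof is correct. The paper does not actually supply an argument for this proposition — it only cites page 282 of \cite{GR01} — and the bijective deletion--contraction argument you give (partitioning the spanning trees of $G$ according to whether they contain $e$, matching the first class with spanning trees of $G\setminus e$ and the second, via $T\mapsto T/e$, with spanning trees of $G/e$) is exactly the standard proof found in that reference, with the non-loop hypothesis used correctly in the contraction step.
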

 
It is obvious that if $G$ is connected but $G\setminus e$ is not, then $\t(G) = T(G\setminus e)$. It follows that deleting or contracting edges of a  graph $G$ cannot increase the complexity $T(G)$. We will make frequent use of this fact here. 

\begin{definition} the \emph{Laplacian matrix} $L$ of a finite graph $G$ is the difference $D - A$, where 
$D$ is the diagonal matrix of degrees of $G$, and $A$ is the adjacency matrix of $G$, with $A_{i,j}$ equal to the number of edges between the $i$th and $j$th vertices of $G$. Loops in $G$ are ignored. 
\end{definition}

\begin{theorem}(Kirchhoff's Matrix Tree Theorem) If $G$ is a finite graph, then 
$\t(G)$ is equal to any cofactor of its Laplacian matrix $L$. \end{theorem}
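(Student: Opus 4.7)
The plan is to reduce the theorem to the Cauchy-Binet formula applied to the (signed) incidence matrix of $G$. Fix an arbitrary orientation of the edges and let $B$ be the $|V| \times |E|$ matrix with $B_{v,e} = +1$ if $v$ is the head of $e$, $-1$ if $v$ is the tail, and $0$ otherwise; loops contribute a zero column and may be ignored. A short direct computation shows $BB^T = D - A = L$. Fix a vertex $v_0$ and let $B_0$ denote $B$ with the row of $v_0$ removed; then the principal minor of $L$ obtained by deleting row and column $v_0$ equals $\det(B_0 B_0^T)$. The Cauchy-Binet formula gives
\[
\det(B_0 B_0^T) \;=\; \sum_{S} \det(B_0[S])^2,
\]
where $S$ ranges over all $(|V|-1)$-element subsets of edges and $B_0[S]$ is the corresponding column submatrix of $B_0$.

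The central step is to verify that $\det(B_0[S])$ equals $\pm 1$ when $S$ is the edge set of a spanning tree of $G$ and equals $0$ otherwise. If $S$ does not span a tree, then having $|V|-1$ edges it must contain a cycle; summing the corresponding columns with signs matching a traversal of the cycle yields the zero vector, showing linear dependence. If $S$ is a spanning tree, one locates a leaf $\ell \ne v_0$, expands $\det(B_0[S])$ along that row (which contains a single nonzero entry $\pm 1$), and applies induction to the smaller tree obtained by deleting $\ell$. Consequently the Cauchy-Binet sum counts the spanning trees of $G$, each contributing $1$, and the principal cofactor at $v_0$ equals $\t(G)$.

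It remains to show that the cofactor is independent of the chosen row and column. Since $L\mathbf{1} = 0$, where $\mathbf{1}$ is the all-ones vector, the adjugate $\operatorname{adj}(L)$ satisfies $L \cdot \operatorname{adj}(L) = 0$, so each column of $\operatorname{adj}(L)$ lies in $\ker L$; by the symmetry of $L$ the same holds for rows. When $G$ is connected, $\ker L$ is spanned by $\mathbf{1}$, forcing all entries of $\operatorname{adj}(L)$ to coincide, so all signed cofactors are equal. When $G$ is disconnected, the block structure of $L$ makes every such minor vanish, matching $\t(G)=0$. I expect the leaf-induction step in the second paragraph to be the only point requiring genuine care; the remaining work is essentially bookkeeping.
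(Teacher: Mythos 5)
The paper does not prove this statement; it is quoted as the classical Matrix Tree Theorem, so there is no in-paper argument to compare against. Your proof is the standard and correct one: $BB^{T}=L$ for the signed incidence matrix, Cauchy--Binet reduces the principal cofactor to $\sum_{S}\det(B_0[S])^2$, and the dichotomy ``$\pm1$ for spanning trees, $0$ otherwise'' follows from the cycle-dependence and leaf-induction arguments exactly as you describe. Your final step via the adjugate ($L\cdot\operatorname{adj}(L)=0$ and $\ker L=\langle\mathbf{1}\rangle$ in the connected case) is precisely what is needed to upgrade ``the principal cofactor at $v_0$'' to ``any cofactor,'' which the theorem as stated requires. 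Two small points worth making explicit: in the disconnected case the relevant dependence is that the rows of any full surviving block sum to zero (each column of a Laplacian block has zero sum), which kills every cofactor, including the mixed ones where the deleted row and column lie in different blocks; and this matches the paper's convention that $\t(G)$ (as opposed to the product complexity $T(G)$ of Definition~\ref{complexity}) is the number of spanning trees of $G$ itself, hence $0$ when $G$ is disconnected. With those remarks the argument is complete.
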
 

\begin{cor}\label{product}(see, for example, \cite{GR01}, p. 284) Assume that $G=(V,E)$ is a finite graph with connected components $G_1, \ldots, G_\mu$ and corresponding vertex sets $V_1, \ldots, V_\mu$. Then $$T(G) = \frac{1}{|V_1|\cdots |V_\mu|}\prod_\l \l,$$
where the product is taken over the set of nonzero eigenvalues of $L$. \end{cor}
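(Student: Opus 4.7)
The plan is to reduce the statement to the connected case and then compute via the characteristic polynomial of the Laplacian. First, reorder the vertices of $G$ according to the partition $V = V_1 \sqcup \cdots \sqcup V_\mu$ induced by the components; then $L$ is block-diagonal with blocks $L_1, \ldots, L_\mu$ equal to the Laplacians of the components. Hence the multiset of eigenvalues of $L$ is the union, with multiplicity, of those of the $L_i$, and the set of nonzero eigenvalues factors correspondingly. Each connected $G_i$ has $L_i$ of rank $|V_i|-1$ with one-dimensional kernel spanned by the all-ones vector on $V_i$, so $L_i$ contributes exactly one zero eigenvalue. Combined with $T(G) = \prod_i \tau(G_i)$ from Definition \ref{complexity}, it suffices to prove the formula in the connected case $\mu = 1$.

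So assume $G$ is connected with $n = |V|$ vertices. By Kirchhoff's Matrix Tree Theorem, every cofactor of $L$ equals $\tau(G)$, so every principal $(n-1) \times (n-1)$ minor of $L$ has determinant $\tau(G)$. I would expand $p(t) := \det(tI - L) = \prod_i (t - \lambda_i)$ in two ways. By the standard identity relating coefficients of the characteristic polynomial to principal minors, the coefficient of $t$ in $p(t)$ is $(-1)^{n-1}$ times the sum of the $n$ principal $(n-1)\times(n-1)$ minors of $L$, hence equal to $(-1)^{n-1}\, n\, \tau(G)$. On the other hand, since $L$ is singular with one-dimensional kernel, $p(t) = t \prod_{\lambda \ne 0}(t - \lambda)$, whose coefficient of $t$ equals $(-1)^{n-1} \prod_{\lambda \ne 0} \lambda$. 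Equating the two expressions yields $\tau(G) = \frac{1}{n} \prod_{\lambda \ne 0} \lambda$, which is the desired formula when $\mu = 1$.

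The only mildly subtle point is the identification of the coefficient of $t$ in $p(t)$ with $n\, \tau(G)$. It rests on two standard facts: the coefficient of $t^{n-k}$ in the characteristic polynomial of any $n \times n$ matrix is $(-1)^k$ times the sum of its $k \times k$ principal minors; and Kirchhoff's theorem ensures that all $n$ of the $(n-1) \times (n-1)$ principal cofactors of $L$ coincide, ultimately because the row and column sums of $L$ vanish. I expect no real obstacle: the argument is a tidy manipulation of the characteristic polynomial once the component reduction and Kirchhoff's theorem are accepted.
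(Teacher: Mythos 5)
Your proof is correct, and the paper itself offers no argument for this corollary beyond citing \cite{GR01}; your reduction to the connected case via the block-diagonal structure of $L$, followed by comparing the coefficient of $t$ in $\det(tI-L)$ with the sum of the $(n-1)\times(n-1)$ principal minors, is exactly the standard proof found in that reference. No gaps: the facts you invoke (rank of the Laplacian of a connected graph, equality of all diagonal cofactors, the principal-minor expansion of the characteristic polynomial) are all sound and suffice.
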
 

Useful lower bounds for $\t(G)$ are more rare. We have the following result of Alon. 

\begin{theorem}\label{lowerbound}\cite{Al90}   If $G=(V,E)$ is a finite connected $\rho$-regular graph, then 
$$\t(G) \ge [\rho(1-\epsilon(\rho))]^{|V|},$$ 
where $\epsilon(\rho)$ is a nonnegative function with $\epsilon(\rho)\to\infty$ as $\rho\to\infty$.
\end{theorem}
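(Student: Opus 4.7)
The plan is to derive the bound from Kirchhoff's Matrix Tree Theorem (Corollary~\ref{product}) via spectral analysis of the Laplacian. Set $n=|V|$; using $\rho$-regularity, $L=\rho I-A$ and the nonzero Laplacian eigenvalues are $\lambda_i=\rho-\mu_i$ for $i=2,\ldots,n$, where $\rho=\mu_1>\mu_2\ge\cdots\ge\mu_n\ge-\rho$ are the adjacency eigenvalues ($\mu_1$ simple by connectedness). Kirchhoff gives
$$\frac{1}{n}\log\t(G) \;=\; \frac{n-1}{n}\log\rho \;-\; \frac{\log n}{n} \;+\; \frac{1}{n}\sum_{i=2}^{n}\log\!\Bigl(1-\frac{\mu_i}{\rho}\Bigr),$$
so it suffices to bound the final sum below by $-\delta(\rho)$, uniformly in $G$, with $\delta(\rho)\to 0$ as $\rho\to\infty$ (assuming the stated $\epsilon(\rho)\to\infty$ is a typo for $\epsilon(\rho)\to 0$).

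A second-moment estimate handles the bulk. From $\mathrm{tr}(L)=n\rho$ and $\mathrm{tr}(L^2)=n\rho(\rho+1)$ one computes $\sum_{i\ge 2}\mu_i^2=\rho(n-\rho)$, so by Chebyshev the set $\mathcal B=\{i\ge 2:|\mu_i|\le\rho/2\}$ has complement of size at most $4(n-\rho)/\rho$. On $\mathcal B$ the elementary inequality $\log(1-x)\ge -x-x^2$ (valid for $|x|\le 1/2$) together with $\sum_{i\ge 1}\mu_i=0$ yields
$$\frac{1}{n}\sum_{i\in\mathcal B}\log(1-\mu_i/\rho) \;\ge\; -\frac{1}{n} - \frac{5}{\rho},$$
which is $o(1)$ as $\rho\to\infty$. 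The bad indices with $\mu_i<-\rho/2$ contribute $\log(1-\mu_i/\rho)>\log(3/2)>0$ and only help.

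The main obstacle is bounding the contribution of the at most $4n/\rho$ problematic indices with $\mu_i>\rho/2$, for which $\lambda_i=\rho-\mu_i$ can be as small as the Cheeger lower bound $4/n^2$, making $\log\lambda_i$ of order $-\log n$. To eliminate the $n$-dependence I would iterate the moment argument at higher powers using the walk-lifting identity $\mathrm{tr}(A^k)\ge n\,w_k(\rho)$, where $w_k(\rho)$ counts closed walks of length $k$ at the root of the $\rho$-regular tree $T_\rho$ (with $w_k(\rho)^{1/k}\to 2\sqrt{\rho-1}$): each closed walk in $T_\rho$ projects to a closed walk in $G$. Combined with the universal upper bound $\mathrm{tr}(A^k)\le n\rho^k$, these sandwich the empirical spectral moments between those of the Kesten--McKay measure on $T_\rho$ and a delta mass at $\rho$, producing concentration estimates $|\{i:\mu_i>\rho(1-\rho^{-\alpha})\}|\le C_\alpha n\rho^{-\beta(\alpha)}$ that are uniform in $n$. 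Organizing the problematic indices into dyadic shells $\{i:\mu_i\in(\rho(1-2^{-j}),\rho(1-2^{-j-1})]\}$ and summing the resulting $\log$-contributions bounds the total bad contribution by $O((\log\rho)/\sqrt{\rho})$ per vertex, absorbable into $\delta(\rho)$. The most delicate point is obtaining the shell bounds uniformly in $n$; the full walk-lifting identity, not just the second moment, appears essential here, and verifying its sharpness for the extremal eigenvalues is where the real work lies.
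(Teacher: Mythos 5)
First, a point of orientation: the paper offers no proof of this statement at all --- it is quoted from Alon \cite{Al90} as a known result (and you are right that the paper's ``$\epsilon(\rho)\to\infty$'' is a typo for $\epsilon(\rho)\to 0$; as written the bound is vacuous). So your attempt has to stand on its own, and it does not yet: you have correctly reduced the problem to its hard core, but the mechanism you propose for that core is pointed in the wrong direction. Your setup (Kirchhoff, $\lambda_i=\rho-\mu_i$, the second-moment identity $\sum_{i\ge 2}\mu_i^2=\rho(n-\rho)$, Chebyshev, and the bulk estimate) is fine, and you are right that everything hinges on the at most $O(n/\rho)$ adjacency eigenvalues near $\rho$, each of which may have $\lambda_i$ as small as $\Theta(1/n^2)$ and hence contribute $-\Theta(\log n)$ to $\log\tau$.

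The gap is in the proposed ``sandwich.'' The walk-lifting inequality $\mathrm{tr}(A^k)\ge n\,w_k(\rho)$ is a \emph{lower} bound on spectral moments, and lower bounds on $\mathrm{tr}(A^{2k})$ cannot upper-bound the number of eigenvalues near $\rho$; only upper bounds on traces can do that. The only upper bound you invoke, $\mathrm{tr}(A^k)\le n\rho^k$, gives $|\{i:\mu_i>\rho(1-\delta)\}|\le n(1-\delta)^{-2k}$, which is vacuous, and even the sharper $\mathrm{tr}(A^{2k})\le \rho^{2k-2}\,\mathrm{tr}(A^2)=n\rho^{2k-1}$ yields $N\le n\rho^{-1}(1-\delta)^{-2k}$, a bound that \emph{worsens} as $k$ grows and so never improves on the $k=1$ Chebyshev count $\approx n/\rho$, independently of $\delta$. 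Consequently your dyadic shells cannot be populated: the moment method with these two ingredients gives no decay of $N_j$ in $j$, and pairing the flat count $n/\rho$ with the worst case $\lambda_i\ge c/n^2$ leaves an error term of order $(\log n)/\rho$ per vertex, which is not a function of $\rho$ alone --- precisely the $n$-dependence you set out to remove. Closing this requires a genuinely new input about \emph{connected} regular graphs (this is where Alon's actual argument does its work, via a combinatorial/isoperimetric control on the number and size of small Laplacian eigenvalues, not a pure trace computation); nothing in your two trace bounds sees connectivity at all, and connectivity is essential, since without it the conclusion is false.
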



%
%

\section{Graphs with free $\Z^d$-symmetry and statement of results.} We regard $\Z^d$ as the multiplicative abelian group freely generated by $x_1, \ldots, x_d$. We denote the Laurent polynomial ring $\Z[\Z^d] = \Z[x_1^{\pm 1}, \ldots, x_d^{\pm 1}]$ by $\Rd$. As an abelian group $\Rd$ is generated freely by monomials $x^\s = x_1^{s_1} \ldots x_d^{s_d}$, where $\s = (s_1, \cdots, s_d) \in \Z^d$. 

Let $G=(V,E)$ be graph with a \emph{cofinite free $\Z^d$-symmetry}. By this we mean that $G$ has a free $\Z^d$-action by automorphisms such that the quotient  graph $\o G= (\o E, \o V)$ is finite. Such a graph is necessarily locally finite. The vertex set $V$ and the edge set $E$ consist of finitely many orbits $v_{1, \s}, \ldots, v_{n,\s}$ and $e_{1, \s}, \ldots, e_{m, \s}$, respectively. The $\Z^d$-action is determined by 
\begin{equation} x^{\s'} \cdot v_{i, \s} = v_{i, \s+ \s'}, \quad \quad   x^{\s'} \cdot e_{j, \s} = e_{j, \s+ \s'},\end{equation}
where $1\le i \le n,\ 1 \le j \le m$ and $\s, \s' \in \Z^d$. (When $G$ is embedded in some Euclidean space with $\Z^d$ acting by translation, it is usually called a \emph{lattice graph}. Such graphs arise naturally in physics, and they have been studied extensively.)

It is helpful to think of $G$ as a covering of a graph $\overline{G}$ in the $d$-torus $\T^d = \R^d/\Z^d$ (not necessarily embedded), with projection map determined by $v_{i, \s} \mapsto v_i$ and $e_{j, \s} \mapsto e_j$. The cardinality  $|\o V|$ is equal to the number $n$ of vertex orbits of $G$, while $|\o E|$ is the number $m$ of edge orbits. 

If $\La \subset \Z^d$ is a subgroup, then the intermediate covering graph in $\Bbb R^d/\La$ will be denoted by $G_\La$.  The subgroups $\La$ that we will consider have index $r < \infty$, and hence $G_\La$ will be a finite $r$-sheeted cover of $\o G$ in the $d$-dimensional torus  $\Bbb R^d/\La$.

Given a graph $G$ with cofinite free $\Z^d$-symmetry, the Laplacian matrix is defined to be the $(n \times n)$-matrix $L = D - A$, where now $D$ is the diagonal matrix of degrees of $v_{1, \s}, \ldots, v_{n, \s}$ while $A_{i, j}$ is the sum of monomials $x^\s$ for each edge in $G$ from $v_{i, \bf{0}}$ to $v_{j, \s}$.  The \emph{Laplacian polynomial} $\De$ is the determinant of $L$. It is well defined up to multiplication by units of the ring $\Rd$. Examples can be found in \cite{LSW14}.

The following is a consequence of the main theorem of \cite{Fo93}. It is made explicit in Theorem 5.2 of \cite{Ke12}. 

\begin{prop}\label{poly} \cite{Ke12} Let $G$ a graph with cofinite free $\Z^d$-symmetry. Its Laplacian polynomial has the form \begin{equation}\label{polys} \De(G) = \sum_F\  \prod_{\rm Cycles\ of\ F} (2-w -w^{-1}),\end{equation} where  the sum is over all cycle-rooted spanning forests $F$ of $\o G$, and $w, w^{-1}$ are the monodromies of the two orientations of the cycle. \end{prop}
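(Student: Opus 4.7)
The plan is to view the Laplacian matrix $L$ of the covering graph $G$ as the twisted Laplacian of the finite quotient $\o G$ with holonomies in $\Rd$, and then apply Forman's determinant formula to compute $\det(L)$.

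First, I would set up the holonomy assignment explicitly. Each oriented edge $e$ of $\o G$ from $v_i$ to $v_j$ lifts canonically to an edge in $G$ from $v_{i,{\bf 0}}$ to some $v_{j,\s}$, and we set $w(e) = x^\s \in \Rd$; reversing orientation replaces $w(e)$ by $w(e)^{-1}$. Under this convention the diagonal entries $L_{ii}$ count the (unweighted) edges at $v_i$ in $\o G$, while the off-diagonal entries become $L_{ij} = -\sum_{e:v_i \to v_j} w(e)$. This matches the definition of $L$ in the excerpt and identifies $L$ with the Laplacian of the $\Rd$-valued line bundle over $\o G$ whose holonomies are the $w(e)$. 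A loop in $\o G$ with monodromy $w$ (necessarily nontrivial, since the cover is free) then contributes $2 - w - w^{-1}$ to $L_{ii}$, matching the formula's cycle weight for a cycle of length one.

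Second, I would invoke the main theorem of \cite{Fo93}, in the explicit form given as Theorem 5.2 of \cite{Ke12}: for any graph equipped with such an edge holonomy, the determinant of the twisted Laplacian equals
$$\sum_F \prod_{C \in \mathrm{Cycles}(F)} \bigl(2 - w(C) - w(C)^{-1}\bigr),$$
where $F$ ranges over cycle-rooted spanning forests of $\o G$ and $w(C) = \prod_{e\in C} w(e)$ is the holonomy around $C$. As a sanity check on the per-cycle factor, direct expansion of the twisted $k\times k$ Laplacian of $C_k$ yields exactly $2 - w - w^{-1}$; the non-cycle (tree) edges of a cycle-rooted spanning forest contribute trivially after a matrix-tree-type telescoping at the rooting vertex.

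The combinatorial core of the proof, supplied by Forman, proceeds by expanding $\det(L)$ via the Leibniz formula and constructing a sign-reversing involution that cancels all configurations except those corresponding to cycle-rooted spanning forests, with the two orientations of each cycle combining to produce the symmetric factor $2 - w(C) - w(C)^{-1}$. The main obstacle is this sign-reversing bookkeeping; since it is carried out cleanly in \cite{Fo93} and \cite{Ke12}, our proof reduces to verifying that the Laplacian $L$ defined in the excerpt agrees with the twisted Laplacian to which those theorems apply.
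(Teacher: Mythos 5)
Your proposal is correct and takes essentially the same route as the paper, which offers no independent proof but simply records the statement as a consequence of Forman's theorem as made explicit in Theorem 5.2 of \cite{Ke12}. Your added verification that the matrix $L = D - A$ of the covering graph coincides with the twisted (holonomy-weighted) Laplacian of $\o G$ is exactly the dictionary the paper leaves implicit, so nothing essential differs.
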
 

A cycle-rooted spanning forest (CRSF) of $\o G$  is a subgraph of $G$ containing all of $V$ such that each connected component has exactly as many vertices as edges and therefore has a unique cycle. The element $w$ is the monodromy of the cycle, or equivalently, its homology in $H_1(\T^d; \Z) \cong \Z^d$.
See \cite{Ke12} for details. 

A graph with cofinite free $\Z^d$-symmetry need not be connected. In fact, it can have countably many connected components. Nevertheless, the number of  $\Z^d$-orbits of components, henceforth called \emph{component orbits}, is necessarily finite. 

\begin{prop} \label{components}  If $G$ is a graph with cofinite free $\Z^d$-symmetry and component orbits $G_1, \ldots, G_t$, then $\De(G) = \De(G_1)\cdots \De(G_t)$. \end{prop}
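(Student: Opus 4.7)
The plan is to show that, after suitably ordering the vertex orbits of $G$, the Laplacian matrix $L$ of $G$ is block diagonal with blocks that are precisely the Laplacian matrices $L_1, \ldots, L_t$ of the component orbits $G_1, \ldots, G_t$. The identity $\De(G)=\De(G_1)\cdots \De(G_t)$ then follows by taking determinants.

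First I would check that each component orbit $G_i$ inherits the structure of a graph with cofinite free $\Z^d$-symmetry, so that $\De(G_i)$ is defined. The $\Z^d$-action on $G$ permutes the connected components of $G$ and, by definition, is transitive on each component orbit; restricting to $G_i$ we obtain a $\Z^d$-action on $G_i$ that is free (as a restriction of a free action) and whose quotient $\o{G_i}$ is the (finite) image of $G_i$ under $G\to \o G$. Thus $\o G$ is the disjoint union of the $\o{G_i}$, and the vertex orbits $v_1,\ldots,v_n$ and edge orbits $e_1,\ldots,e_m$ of $G$ partition according to which $G_i$ contains them.

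Next I would examine the entries of the Laplacian $L=D-A$ using the fact that any edge of $G$ joins two vertices in the same connected component, hence in the same component orbit. Index rows and columns of $L$ by the vertex orbits, grouped by component orbit. The diagonal entry for $v_{k,\mathbf 0}$ counts edges incident to $v_{k,\mathbf 0}$, all of which stay inside the component orbit containing $v_k$, so $D_{kk}$ agrees with the corresponding diagonal entry of the Laplacian of that $G_i$. For the off-diagonal entry $A_{k,\ell}=\sum_{\s} x^\s$ (summed over edges from $v_{k,\mathbf 0}$ to $v_{\ell,\s}$): if $v_k$ and $v_\ell$ lie in different component orbits there are no such edges, so $A_{k,\ell}=0$; if they lie in the same $G_i$, the sum equals the corresponding entry of the Laplacian of $G_i$. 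Consequently, with vertex orbits ordered by component orbit,
\begin{equation*}
L \;=\; \begin{pmatrix} L_1 & & \\ & \ddots & \\ & & L_t \end{pmatrix},
\end{equation*}
and $\De(G)=\det L = \det L_1 \cdots \det L_t = \De(G_1)\cdots \De(G_t)$.

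The only subtle point, which is also the main (mild) obstacle, is the bookkeeping in the first step: verifying that a component orbit, which may consist of infinitely many connected components permuted by $\Z^d$, really does inherit a cofinite free $\Z^d$-action and has a well-defined Laplacian polynomial. Once that is in place, the block-diagonal structure is immediate and the determinant factorization closes the argument.
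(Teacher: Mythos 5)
Your proposal is correct and follows essentially the same route as the paper: the paper's proof is precisely the observation that, after relabeling vertex orbits by component orbit, the Laplacian matrix is block diagonal with blocks the Laplacians of the $G_i$, so the determinant factors. Your additional care in checking that each component orbit inherits a cofinite free $\Z^d$-symmetry and that no edges cross between component orbits is just a fuller write-up of the same argument.
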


\begin{proof} After suitable relabeling, the Laplacian matrix for $G$ is a  block diagonal matrix with diagonal blocks equal to the 
Laplacian matrices for $G_1, \ldots, G_s$. The result follows immediately. 
\end{proof}

\begin{prop} \label{zeropoly} Let $G$ a graph with cofinite free $\Z^d$-symmetry. Its Laplacian polynomial $\De$ is identically zero if and only $G$ contains a closed component. 
\end{prop}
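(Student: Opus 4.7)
The plan is to establish the two implications separately, using Proposition~\ref{components} to reduce in each case to a single component orbit.

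For the forward direction, suppose $G$ has a closed (finite) component $C$. Because $\Z^d$ acts freely on vertices, the stabilizer of $C$ in $\Z^d$ is a finite subgroup of a torsion-free group, hence trivial. The component orbit $G_0 = \Z^d\cdot C$ is therefore a disjoint union of translates of $C$, and the quotient $\o G_0 = G_0/\Z^d$ is isomorphic to $C$ with every cycle having trivial monodromy. Consequently the Laplacian matrix of $G_0$ coincides with the ordinary Laplacian of the finite graph $C$, whose determinant vanishes (the constant vector is in its kernel). By Proposition~\ref{components}, $\De(G) = \De(G_0)\prod_{i\ne 0}\De(G_i) = 0$.

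For the converse, assume no component of $G$ is closed. By Proposition~\ref{components} it suffices to treat a single component orbit $G = \Z^d\cdot C$ with $C$ infinite. Let $H = \mathrm{Stab}(C)\le\Z^d$; this is a free abelian group of rank $r\ge 1$ since $C$ is infinite with cofinite free $H$-action. Every edge of $G$ has both endpoints in the same component, so every monomial appearing in the Laplacian of $G$ lies in $\Z[H]\subseteq\Rd$. In fact the $n\times n$ Laplacian matrix of $G$ coincides with that of $C$ viewed as a graph with cofinite free $H$-symmetry, so $\De(G)=\De(C)$ as elements of $\Z[H]$. It therefore suffices to prove $\De(C)\ne 0$.

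Fix a finite-index subgroup $\La\le H$ with $H/\La$ nontrivial (say $\La = 2H$). The intermediate cover $C_\La = C/\La$ is finite and connected (since $C$ is), so its Laplacian $L(C_\La)$ has one-dimensional kernel spanned by constants. Over $\Co$, the $H/\La$-action diagonalizes $L(C_\La)$ into blocks $L(\chi)$ indexed by characters $\chi\in\widehat{H/\La}$, where $L(\chi)$ is the matrix obtained from the Laplacian of $C$ by substituting $\chi(h)$ for each $h\in H$; in particular $\det L(\chi)=\De(C)(\chi)$. Since $\sum_\chi \dim\ker L(\chi)=\dim\ker L(C_\La)=1$, and the trivial character alone already contributes a one-dimensional kernel (its block is the Laplacian of the connected finite graph $\o C$), every nontrivial $\chi$ must satisfy $\det L(\chi)\ne 0$. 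Because $r\ge 1$ such nontrivial $\chi$ exist, proving $\De(C)\not\equiv 0$.

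The main obstacle I expect is the identification $\De(G)=\De(C)$ when $H\subsetneq\Z^d$: one must verify that $\Z^d\cdot C$ decomposes into $[\Z^d:H]$ pairwise disjoint copies of $C$ with no edges between them, which forces the Laplacian monomials into $\Z[H]$ and lets us work on $C$ alone. The character-decomposition argument on a single finite cover then furnishes an explicit $\chi$ at which $\De(C)$ is nonzero.
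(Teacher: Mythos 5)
Your proof is correct, but it takes a genuinely different route from the paper's. The paper proves both directions directly from the Forman--Kenyon expansion of Proposition~\ref{poly}: if some component orbit consists of closed components, every cycle of its quotient has trivial monodromy and every summand $2-w-w^{-1}$ vanishes; conversely one extends a family of cycles with nontrivial monodromy to a cycle-rooted spanning forest and observes that the corresponding summand has positive constant coefficient while all summands have nonnegative constant coefficient. You avoid Proposition~\ref{poly} entirely. Your forward direction (trivial stabilizer of a finite component, hence the Laplacian matrix of that component orbit is the ordinary, singular, Laplacian of a finite graph) is sound. Your converse replaces the combinatorial positivity argument with a spectral one: reduce to a connected component $C$ with stabilizer $H\cong\Z^r$, $r\ge 1$, pass to the single finite quotient $C_{2H}$, and use the character block-diagonalization of its Laplacian --- exactly the mechanism of Proposition~\ref{productformula}, whose proof is independent of the present statement, so there is no circularity --- together with the fact that a connected finite graph's Laplacian has one-dimensional kernel, to conclude that $\De(C)$ is nonzero at every nontrivial character of $H/2H$. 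What each approach buys: the paper's is shorter given Proposition~\ref{poly} and shows in passing that a nonzero $\De$ has positive constant term; yours is independent of the Forman/Kenyon formula and yields more explicit information, namely specific points of the torus where $\De$ does not vanish, in the spirit of Lemma~\ref{roots}. The one point you rightly flag --- that $\De(G)=\De(C)$ requires choosing the orbit representatives inside $C$, which forces all Laplacian monomials into $\Z[H]$ --- does check out, since the translates of $C$ are pairwise disjoint components and $\De$ is in any case only defined up to units.
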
 

\begin{proof} If $G$ contains a closed component, then some component orbit $G_i$ consists of closed components.  We have $\De(G_i)=0$ by \ref{polys}, since all cycles of $\overline{G_i}$ have monodromy 0. By Proposition \ref{components}, $\De$ will be identically zero. 

Conversely, assume that no component of $G$ is closed. Each component of $\o G$ must contain a cycle with nontrivial monodromy. We can extend this collection of cycles to a cycle rooted spanning forest $F$ with no additional cycles.  The corresponding summand  in \ref{polys} has positive constant coefficient. Since every summand has nonnegative constant coefficient,  $\De$ is not identically zero.

\end{proof}

\begin{definition} \label{mahler} The \emph{logarithmic Mahler measure} of a nonzero polynomial 
$f(x_1, \ldots, x_d) \in \Rd$ is 
\begin{equation*} m(f) = \int_0^1 \ldots \int_0^1 \log|f(e^{2\pi i \theta_1}, \ldots, e^{2\pi i \theta_d})| d\theta_1 \cdots d\theta_d. \end{equation*}

\end{definition} 

\begin{remark} (1) The integral in Definition \ref{mahler} can be singular, but nevertheless it converges. 
(See \cite{EW99} for two different proofs.) \smallskip

(2) If $u_1, \ldots, u_d$ is another basis for $\Z^d$, then $\De(u_1, \ldots, u_d)$ has the same logarithmic Mahler measure as $\De(x_1, \ldots, x_d)$. \smallskip

(3) If $f, g \in \Rd$, then $m(fg) = m(f) + m(g)$. Moreover, $m(f) =0$ if and only if $f$ is a unit or a unit times a product of 1-variable cyclotomic polynomials, each evaluated at a monomial of $\Rd$
(see \cite{Sc95}). In particular, the Mahler measure of the Laplacian polynomial $\De$ is well defined.

(4) When $d=1$, Jensen's formula shows that $m(f)$ can be described another way. If $f(x) = c_s x^s+ \cdots c_1 x + c_0$, $c_0c_s \ne 0$, then
\begin{equation*} m(f) = \log|c_s| + \sum_{i=1}^s \log |\lambda_i|, \end{equation*}
where $\lambda_1, \ldots, \lambda_s$ are the roots of $f$.  \smallskip

\end{remark}

\begin{theorem}(cf. \cite{Ly05}) \label{limit} Let $G=(V,E)$ be graph with cofinite free $\Z^d$-symmetry. If $\De \ne 0$, then  
\begin{equation}  \lim_{\langle \La \rangle \to \infty}  \frac{1}{|\Z^d/\La|} \log T(G_\La)= m(\De), \end{equation}
where $\La$ ranges over all finite-index subgroups of $\Z^d$, and $\langle \La \rangle$  denotes the minimum length of a nonzero vector in $\La$. 
\end{theorem}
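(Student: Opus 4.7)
The plan is to diagonalize the Laplacian of each finite quotient $G_\La$ using characters of the finite group $\Z^d/\La$, apply Kirchhoff's theorem in the form of Corollary~\ref{product} to express $T(G_\La)$ as a product of values of $\De$ over characters, and then recognize the normalized logarithm as a Riemann-type sum converging to $m(\De)$ as $\langle \La \rangle \to \infty$.

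First, for each character $\chi \in \widehat{\Z^d/\La}$, identified with a point $(\chi(x_1),\ldots,\chi(x_d)) \in \T^d$, I would substitute into the entries of the Laplacian matrix $L\in\mathrm{Mat}_n(\Rd)$ to obtain an $n\times n$ complex matrix $L(\chi)$. The standard Fourier decomposition of the space of complex functions on $V(G_\La)\cong V(\o G)\times(\Z^d/\La)$ into isotypic components for the $\Z^d/\La$-action exhibits the Laplacian of $G_\La$ as similar to the block-diagonal matrix $\bigoplus_\chi L(\chi)$. Consequently, the multiset of eigenvalues of the Laplacian of $G_\La$ is the disjoint union of the spectra of the $L(\chi)$, and $\det L(\chi)=\De(\chi)$ for every $\chi$.

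Second, letting $P(\chi)$ denote the product of the nonzero eigenvalues of $L(\chi)$ (so $P(\chi)=\De(\chi)$ whenever $\De(\chi)\ne 0$), Corollary~\ref{product} gives
\begin{equation*}
T(G_\La) \;=\; \frac{1}{|V_1|\cdots|V_{\mu(\La)}|}\prod_\chi |P(\chi)|,
\end{equation*}
where $\mu(\La)$ is the number of connected components of $G_\La$ and $|V_1|,\ldots,|V_{\mu(\La)}|$ their vertex counts. Since each $|V_i|$ and $\mu(\La)$ are at most $n|\Z^d/\La|$, the correction from the reciprocal factor contributes $O(\log|\Z^d/\La|)/|\Z^d/\La|$, which vanishes in the limit. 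The remaining task is therefore to show
\begin{equation*}
\frac{1}{|\Z^d/\La|}\sum_\chi \log|P(\chi)| \;\longrightarrow\; m(\De).
\end{equation*}

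Third, formally this sum is a Riemann average of $\log|\De|$ over the $|\Z^d/\La|$ character points, and as $\langle\La\rangle\to\infty$ these points equidistribute on $\T^d$, suggesting convergence to $m(\De)=\int_{\T^d}\log|\De|$. The \emph{main obstacle} is that $\log|\De|$ has logarithmic singularities along the real subvariety $\{\De=0\}\subset\T^d$, so bare equidistribution is not sufficient; one must further prevent the character points from clustering too densely near the zero set. Two quantitative inputs are required: (i) a uniform upper bound (e.g.\ a theorem of Schmidt, see \cite{Sc95}) controlling the number of characters $\chi$ with $|\De(\chi)|<\e$ in terms of $\e$ and $\langle\La\rangle$, and (ii) the fact that for characters with $\De(\chi)=0$ the substitute value $\log|P(\chi)|$ is bounded below by an explicit polynomial function of $\langle\La\rangle$ coming from Northcott's theorem on the heights of algebraic numbers. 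It is precisely for step (i) that the hypothesis $\langle\La\rangle\to\infty$, rather than the weaker $|\Z^d/\La|\to\infty$, is essential. Combining (i) and (ii) with integrability of $\log|\De|$ on $\T^d$ yields the required convergence, completing the proof along the lines of \cite{LSW90, Ly05, Sc95}.
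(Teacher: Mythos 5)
Your overall route---diagonalizing $L_\La$ over the characters of $\Z^d/\La$, invoking Corollary~\ref{product}, and treating the normalized log-product as a Riemann sum for $m(\De)$---is the same as the paper's: its Proposition~\ref{productformula} is exactly your character decomposition, carried out via Kronecker products of permutation matrices. However, your second step has a genuine gap. You claim that since each $|V_i|$ and $\mu(\La)$ are at most $n|\Z^d/\La|$, the factor $1/(|V_1|\cdots|V_{\mu(\La)}|)$ contributes $O(\log|\Z^d/\La|)/|\Z^d/\La|$. That only follows if the number of components $\mu(\La)$ stays bounded. In general $\log(|V_1|\cdots|V_{\mu(\La)}|)=\sum_i\log|V_i|$ can be of order $|\Z^d/\La|$ when $\mu(\La)$ grows linearly in $|\Z^d/\La|$ while the $|V_i|$ stay bounded, so the normalized correction need not vanish; and the theorem does allow $G$ to have infinitely many components (e.g.\ $\Gr_2$ with all vertical edges deleted), in which case $\mu(\La)\to\infty$. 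The missing input is precisely the hypothesis $\De\ne 0$: by Proposition~\ref{zeropoly} no component of $G$ is finite, and the paper deduces that each component of $G_\La$ has at least $\langle\La\rangle/|w|$ vertices for a fixed $w$ stabilizing a component, so the common component size $s$ tends to infinity and the normalized correction is $|\o V|\,(\log s)/s\to 0$. As written, you never use $\De\ne 0$ beyond making $m(\De)$ meaningful, which is a sign the step cannot be complete.

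Your third step diverges from the paper in flavor rather than in substance: the paper changes to a basis adapted to $\La$ and cites Everest--Ward (pp.~58--59) for the claim that the cells of the partition meeting the zero set of $\De$ contribute negligibly, whereas you propose the Lind--Schmidt--Ward style machinery (counting characters with $|\De(\chi)|<\e$, plus lower bounds for the nonzero-eigenvalue products $P(\chi)$ at characters where $\De(\chi)=0$). Your outline is more candid about where the analytic difficulty lies, but it is still an outline: a uniform counting estimate of the kind you attribute to Schmidt is not available for arbitrary polynomials, and the lower bound at torsion points is normally obtained not from Northcott directly but from the fact that the product of the $P(\chi)$ over a Galois orbit is a nonzero rational integer. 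What makes either route viable for Laplacian polynomials is that $V_\Co(\De)\cap\S^d$ is finite (a single point $(1,\ldots,1)$ when $G$ is connected), a fact neither your sketch nor the paper's written proof makes explicit. So: the normalization step needs to be repaired along the paper's lines; the Riemann-sum step is acceptable at roughly the paper's own level of rigor, with both treatments deferring the hard analysis to references.
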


\begin{remark} (1) The condition $\langle \La \rangle \to \infty$ ensures that fundamental region of $\La$ grow in all directions. 

(2) In the case that $G$ is connected, each quotient $G_\La$ is also connected. In the statement of the theorem, $T(G_\La)$ is simply $\t(G_\La)$. In this case, Theorem \ref{limit} is proven in \cite{Ly05} for graphs of  greater generality.  

(3) Theorem \ref{limit} was established in \cite{LSW14} with the weaker limit superior rather than an ordinary limit. The stronger result will follow from analytical remarks in \cite{EW99} related to Mahler measure. \end{remark}

We call the limit in Theorem \ref{limit} the {\it complexity growth rate} of $G$, and denote it by $\gamma(G)$. Its relationship to the {\it thermodynamic limit} or {\it bulk limit} defined for a wide class of lattice graphs is discussed in  \cite{LSW14}. We briefly repeat the idea in order to state Corollary \ref{detgrowth}.

Denote by $R= R(\La)$ a fundamental domain of $\La$. Let $G\vert_R=(V_R, E_R)$ be the full subgraph of $G$ on vertices $v_{i, \s},\ \s \in R$. If  $G\vert_R$ is connected for each $R$, then by Theorem 7.10 of \cite{LSW14} the sequences $\{\tau(G_\La)\}$ and $\{\tau(G\vert_R)\}$ have the same exponential growth rates.   The bulk limit is then $\gamma(G)/|\overline V|$.

When $d \le 2$ and $G$ is a plane graph, the medial construction associates an alternating link diagram $\ell_R$ to $G\vert_R$, for any subgroup $\La \subset \Z^d$ and fundamental region region 
$R$. (This is illustrated in Figure \ref{gridlinks}. See \cite{Ka01} for details.) 

\begin{example} The $d$-dimensional grid graph $\Gr_d$ has vertex set $\Z^d$ and an edge 
from $(s_1, \ldots, s_d)$ to $(s_1', \ldots, s_d')$ if $|s_i -s_i'|=1$ and $s_j = s_j',\ j \ne i$, for every $1 \le i \le d$.  Its Laplacian polynomial is 
\begin{equation*} \De(\Gr_d) = 2d -x_1-x_1^{-1}- \cdots - x_d-x_d^{-1}. \end{equation*}
When $d=2$, it is a plane graph. The medial links $\ell_R$ are indicated in Figure \ref{gridlinks} for $\La = \langle x_1^2, x_2^2 \rangle$ on left and $\La= \langle x_1^3, x_2^3 \rangle$ on right.

\begin{figure}
\begin{center}
\includegraphics[height=2 in]{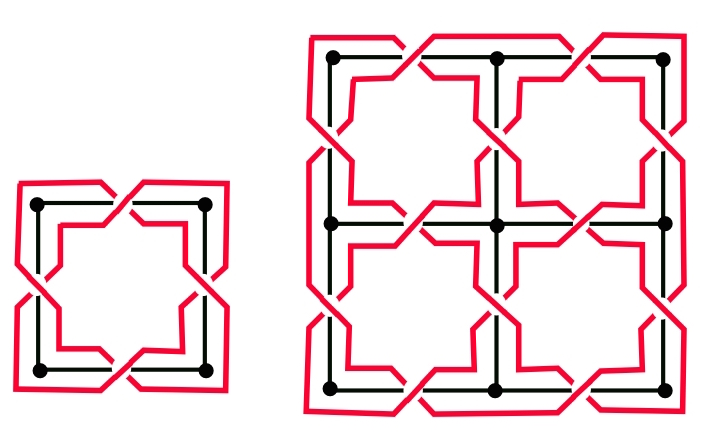}
\caption{Graphs $(\Gr_2)|_R$ and associated links, $\La = \langle x_1^2, x_2^2 \rangle$ and $\langle x_1^3, x_2^3 \rangle$}
\label{gridlinks}
\end{center}
\end{figure}

\end{example}

The {\it determinant} of a link $\ell$, denoted here by $d(\ell)$, is the absolute value of its 1-variable Alexander polynomial evaluated at $-1$.   We recall that a link $\ell$ is \emph{separable} if some embedded $2$-sphere in $\S^3 \setminus \ell$ bounds a $3$-ball containing a proper sublink of $\ell$. Otherwise $\ell$ is \emph{nonseparable}. Any link is the union of nonseparable sublinks.

The determinant of a separable link vanishes. We denote by $D(\ell_R)$ the nonzero product $d(\ell_1) \cdots d(\ell_r)$, where $\ell_1, \ldots, \ell_r$ are the nonseparable sublinks that comprise $\ell$. 

It follows from the Mayberry-Bott theorem \cite{BM54} that if $\ell$ is an alternating link that arises by the medial construction from a finite plane graph, then $d(\ell)$ is equal to the number of spanning trees of the graph (see appendix A.4 in\cite{BZ85}). 
The following corollary is an immediate consequence of Theorem \ref{limit}.  It has been proven independently by Champanerkar and Kofman \cite{CK16}.

\begin{cor}\label{detgrowth} Let $G$ be a plane graph with cofinite free $\Z^d$-symmetry, $d \le 2$. Then $$\lim_{\langle \La \rangle \to \infty}  \frac{1}{\vert \Z^d/\La\vert} \log D(\ell_R) = m(\De).$$ \end{cor}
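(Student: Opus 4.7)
The plan is to reduce the corollary directly to Theorem \ref{limit}, by identifying $D(\ell_R)$ with the complexity $T(G\vert_R)$ and then comparing this to $T(G_\La)$.

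First I would combine the Mayberry--Bott theorem, invoked in the paragraph preceding the statement, with a block decomposition of the medial link. For a connected finite plane graph $H$, Mayberry--Bott gives $d(\ell(H)) = \tau(H)$. For a general finite plane graph, each cut vertex of $H$ gives rise to a $2$-sphere in $\S^3$ that separates $\ell(H)$ into two nonempty sublinks; iterating, the nonseparable sublinks of $\ell(H)$ are precisely the medial links of the blocks (maximal $2$-connected subgraphs) of $H$, with bridges contributing trivial factors. Since $\tau$ is multiplicative over the blocks of a connected graph and $T$ is by definition multiplicative over connected components, applying Mayberry--Bott block-by-block yields
\[ D(\ell(H)) = \prod_B \tau(B) = T(H). \]
Specializing to $H = G\vert_R$ gives $D(\ell_R) = T(G\vert_R)$.

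Next I would compare $T(G\vert_R)$ with $T(G_\La)$. The discussion preceding the corollary invokes Theorem 7.10 of \cite{LSW14} for this purpose: the two sequences share an exponential growth rate as $\langle \La \rangle \to \infty$. Combined with Theorem \ref{limit} this yields
\[ \lim_{\langle \La \rangle \to \infty}\frac{1}{|\Z^d/\La|}\log D(\ell_R) = \lim_{\langle \La \rangle \to \infty}\frac{1}{|\Z^d/\La|}\log T(G_\La) = m(\De), \]
which is the stated conclusion.

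The main obstacle is articulating the block/sublink correspondence precisely: one must verify that cut vertices of the plane graph $G\vert_R$ produce all, and only, the separating $2$-spheres of $\ell_R$, so that nonseparable sublinks correspond bijectively to blocks. This is a local picture argument at each cut vertex of the medial construction and is classical, but needs to be stated carefully. A secondary point, should $G\vert_R$ fail to be connected for some $R$, is to extend Theorem 7.10 of \cite{LSW14} to the product complexity $T$; this should follow from multiplicativity of $T$ together with a boundary estimate, since the number of vertices in $G\vert_R$ belonging to ``truncated'' components is $O(|\partial R|) = o(|\Z^d/\La|)$ and hence contributes negligibly to the normalized limit.
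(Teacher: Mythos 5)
Your proposal is correct and follows the same route the paper intends: it declares the corollary an immediate consequence of Theorem \ref{limit}, relying on the preceding remarks that Mayberry--Bott identifies $d(\ell)$ with the spanning-tree count (so that $D(\ell_R)=T(G\vert_R)$ via the block/nonseparable-sublink correspondence) and that Theorem 7.10 of \cite{LSW14} equates the growth rates of $T(G\vert_R)$ and $T(G_\La)$. Your two flagged points --- the precise block decomposition and the disconnected-$G\vert_R$ case --- are exactly the details the paper leaves implicit, and your sketches for handling them are sound.
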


\begin{remark} (1) As in Theorem \ref{limit}, if  each $G\vert_R$ is connected, then no link $\ell_R$ is separable. In this case, $D(\ell_R)$ is equal to the ordinary determinant of $\ell_R$. 

(2) In \cite{CKP15} the authors consider as well as more general sequences of links.  When $G = \Gr_2$, their results imply that:

$$\lim_{\langle \La \rangle \to \infty} \frac{2 \pi}{c(\ell_R)} \log d(\ell_R)=  v_{oct},$$
where $c(\ell_R)$ is the number of crossings of $\ell_R$ and $v_{oct} \approx 3.66386$ is the volume of the regular ideal octohedron.

\end{remark}

Grid graphs are the simplest connected locally finite graphs admitting free $\Z^d$-symmetry, as the 
following theorem shows. 

\begin{theorem} \label{min} If $G$ is a graph with cofinite free $\Z^d$-symmetry and finitely many connected components, then $\gamma(G)\ge \gamma(\Gr_d)$, and so
$m(\De(G)) \ge m(\De(\Gr_d))$. 
\end{theorem}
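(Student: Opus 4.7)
\medskip\noindent\textit{Proof plan.} The plan is to prove $\gamma(G)\ge\gamma(\Gr_d)$, so that $m(\De(G))\ge m(\De(\Gr_d))$ follows from Theorem~\ref{limit}. The main tool is that edge deletion and contraction of non-loop edges each weakly decrease the complexity $T$ of a finite graph, by Proposition~\ref{recursive} and the subsequent remarks.

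First I would reduce to the case where $G$ is connected. By Proposition~\ref{components} and Theorem~\ref{limit}, $\gamma(G)=\sum_i\gamma(G_i)$, where $G_i$ runs over the component orbits of $G$. Each $G_i$ is a disjoint union of $[\Z^d:\Lambda^{(i)}]$ copies of a single component $C_i$ whose stabilizer is $\Lambda^{(i)}$, and the Laplacian $\De_{\Z^d}(G_i)$ equals $\De_{\Lambda^{(i)}}(C_i)\in\Z[\Lambda^{(i)}]\subseteq\Z[\Z^d]$; a change-of-variables argument shows the Mahler measures agree, giving $\gamma_{\Z^d}(G_i)=\gamma_{\Lambda^{(i)}}(C_i)$. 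Since $\gamma(\Gr_d)\ge 0$, it is enough to establish $\gamma(C)\ge\gamma(\Gr_d)$ for one connected graph $C$ with cofinite free action of $\Lambda^{(i)}\cong\Z^d$.

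Now assume $G$ is connected. Choose a spanning tree $\o T_0$ of the finite quotient $\o G$; each non-tree edge $\o e$ carries a monodromy $\omega(\o e)\in\Z^d$, namely the image of the simple loop in $\o T_0\cup\o e$ under the surjection $\pi_1(\o G,\o v)\twoheadrightarrow\Z^d$. These monodromies together generate $\Z^d$ because the non-tree loops form a free basis of $\pi_1(\o G,\o v)$. Select $d$ non-tree edges $\o e_1,\ldots,\o e_d$ whose monodromies $\omega_1,\ldots,\omega_d$ are linearly independent; they span a finite-index subgroup $\Lambda\subseteq\Z^d$. For any finite-index $\La\subset\Z^d$, delete from $G_\La$ the lifts of all non-tree edges other than $\o e_1,\ldots,\o e_d$ and then contract every edge of the resulting spanning forest $\pi^{-1}(\o T_0)/\La$. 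The outcome $X_\La$ is the Cayley graph of $\Z^d/\La$ with generating set $\{\omega_1,\ldots,\omega_d\}$, and by monotonicity $T(G_\La)\ge T(X_\La)$. This $X_\La$ splits into $[\Z^d:\Lambda+\La]$ connected components, each of which, via the basis isomorphism $\Lambda\cong\Z^d$, $\omega_i\mapsto e_i$, is isomorphic to $(\Gr_d)_{\La'}$ for the finite-index subgroup $\La'\subset\Z^d$ corresponding to $\Lambda\cap\La\subseteq\Lambda$. Using $|\Z^d/\La|=[\Z^d:\Lambda+\La]\cdot|\Z^d/\La'|$, this yields
\begin{equation*}
\frac{\log T(G_\La)}{|\Z^d/\La|}\ \ge\ \frac{\log T((\Gr_d)_{\La'})}{|\Z^d/\La'|}.
\end{equation*}
Boundedness of the basis $\omega_1,\ldots,\omega_d$ forces $\langle\La'\rangle\to\infty$ whenever $\langle\La\rangle\to\infty$, so Theorem~\ref{limit} applied to $\Gr_d$ sends the right side to $m(\De(\Gr_d))=\gamma(\Gr_d)$, while the left side tends to $\gamma(G)$.

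The hardest part will be the bookkeeping when the chosen monodromies $\omega_i$ generate only a proper finite-index subgroup $\Lambda\subsetneq\Z^d$: then $X_\La$ is disconnected, and one must track the three indices $[\Z^d:\La]$, $[\Z^d:\Lambda+\La]$, $[\Lambda:\Lambda\cap\La]$ carefully so that the normalizations on the two sides of the displayed inequality match under the basis change and so that the limit on the right genuinely recovers $m(\De(\Gr_d))$.
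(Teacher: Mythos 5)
Your proof is correct and follows essentially the same route as the paper's: both reduce to a connected $G$, use monotonicity of complexity under edge deletion and contraction to pass to a subquotient that is a Cayley graph of $\Z^d$ on $d$ independent generators $\omega_1,\ldots,\omega_d$, and then account for the finite index of the subgroup $\Lambda$ those generators span. The only organizational difference is that the paper performs the contractions on the infinite graph itself (collapsing edge orbits until one vertex orbit remains) and disposes of the index discrepancy by invoking Lemma \ref{new}, whereas you contract the lifted spanning trees inside each finite quotient $G_\La$ and carry out the index bookkeeping explicitly; both are sound.
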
 

\begin{remark} If $G$ has infinitely many connected components, then the conclusion of Theorem \ref{min} need not hold. Consider, for example, the graph $\Gr_2$ with every vertical edge deleted. The graph has cofinite free $\Z^2$-symmetry. It follows from Lemma \ref{new} below that its complexity growth rate is equal to $m(\De(\Gr_1))=0$, which is less than $m(\De(\Gr_2))$. 
\end{remark}

The following lemma, needed for the proof of Corollary \ref{absolute}, is of independent interest. 

\begin{lemma} \label{nonincreasing} The sequence of complexity growth rates $m(\De(\Gr_d))$ is nondecreasing. 
\end{lemma}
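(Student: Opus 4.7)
The plan is to integrate out one variable from $m(\Delta(\Gr_{d+1}))$ in closed form, and then compare the result pointwise with the integrand for $m(\Delta(\Gr_d))$ on the $d$-torus.

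The starting point is the telescoping identity
\begin{equation*}
\Delta(\Gr_{d+1})(x_1,\ldots,x_{d+1})=\Delta(\Gr_d)(x_1,\ldots,x_d)+\bigl(2-x_{d+1}-x_{d+1}^{-1}\bigr),
\end{equation*}
which is immediate from the formula for $\Delta(\Gr_d)$. For each fixed $(\theta_1,\ldots,\theta_d)\in[0,1]^d$, set
\begin{equation*}
a=a(\theta_1,\ldots,\theta_d)=\Delta(\Gr_d)\bigl(e^{2\pi i\theta_1},\ldots,e^{2\pi i\theta_d}\bigr)=2d-\sum_{j=1}^d 2\cos(2\pi\theta_j)\ge 0.
\end{equation*}
By Fubini (which is justified since the integrand is in $L^1$ as recorded in Remark 4.5(1)),
\begin{equation*}
m(\Delta(\Gr_{d+1}))=\int_{[0,1]^d}\!\!\left(\int_0^1 \log\bigl(a+2-2\cos(2\pi\theta_{d+1})\bigr)\,d\theta_{d+1}\right)d\theta_1\cdots d\theta_d.
\end{equation*}

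Next I would evaluate the inner integral by Jensen's formula (Remark 4.5(4)). Writing $z=e^{2\pi i\theta_{d+1}}$, the integrand is $\log|{-z^{-1}}(z^2-(a+2)z+1)|$. Since $a\ge 0$, the polynomial $z^2-(a+2)z+1$ has two positive real roots $r(a)\ge 1\ge 1/r(a)$ of product $1$, with
\begin{equation*}
r(a)=\tfrac{1}{2}\Bigl((a+2)+\sqrt{(a+2)^2-4}\Bigr).
\end{equation*}
Jensen's formula therefore yields
\begin{equation*}
\int_0^1 \log\bigl(a+2-2\cos(2\pi\theta_{d+1})\bigr)\,d\theta_{d+1}=\log r(a).
\end{equation*}

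The elementary inequality $r(a)\ge a$ then finishes the argument. Indeed, from $r(a)\cdot r(a)^{-1}=1$ and $r(a)+r(a)^{-1}=a+2$ with $r(a)\ge 1$, one has $r(a)=a+2-r(a)^{-1}\ge a+1>a$. Consequently $\log r(a)\ge \log a$ pointwise on $[0,1]^d$ (with the convention $\log 0=-\infty$), and integrating this inequality gives
\begin{equation*}
m(\Delta(\Gr_{d+1}))=\int_{[0,1]^d}\log r(a)\,d\theta\;\ge\;\int_{[0,1]^d}\log a\,d\theta=m(\Delta(\Gr_d)),
\end{equation*}
as required.

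I do not foresee a real obstacle. The only delicate point is that $a$ vanishes at the origin of the torus, making $\log a$ singular there; but since the integrand is bounded above and the singular set has measure zero, the pointwise inequality $\log r(a)\ge \log a$ integrates without issue. In fact the argument shows strict monotonicity, since $r(a)>a$ for every $a\ge 0$.
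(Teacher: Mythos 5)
Your argument is correct, but it takes a genuinely different route from the paper. The paper's proof is purely combinatorial: delete from $\Gr_d$ all edges parallel to the $d$th coordinate axis, observe that the result is a disjoint union of translates of $\Gr_{d-1}$, invoke the fact that deletion cannot increase complexity, and then use Lemma \ref{new} to identify the complexity growth rate of that disjoint union with $m(\De(\Gr_{d-1}))$. That argument is two lines long and leans entirely on machinery already built for Theorem \ref{min}. Your proof instead works directly with the Mahler measure integral: you integrate out the last variable via Jensen's formula, obtaining $\int_0^1 \log\bigl(a+2-2\cos(2\pi\theta)\bigr)\,d\theta=\log r(a)$ with $r(a)=\tfrac12\bigl((a+2)+\sqrt{a(a+4)}\bigr)$, and then compare $\log r(a)$ pointwise with $\log a$. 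All the steps check out: the quadratic $z^2-(a+2)z+1$ has reciprocal positive real roots, $r(a)=a+2-r(a)^{-1}\ge a+1$, and the singularity of $\log a$ at the origin of the torus is harmless since both integrals converge. What your approach buys is more than the paper claims: since $\log r(a)\ge\log(a+1)>\log a$ off a null set and both integrals are finite, you get \emph{strict} monotonicity of the sequence $m(\De(\Gr_d))$, and even an explicit lower bound $m(\De(\Gr_{d+1}))\ge\int_{[0,1]^d}\log(a+1)\,d\theta$ on the gap. What the paper's approach buys is generality: the deletion argument applies verbatim to comparisons between other pairs of graphs related by edge deletion, whereas your computation is specific to the additive structure of $\De(\Gr_d)$. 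One small caution if you cite the paper's Remark (4) for Jensen's formula: as printed there it omits the truncation $\log^+|\lambda_i|=\max(0,\log|\lambda_i|)$; your application is nonetheless correct because you only keep the root of modulus at least one.
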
 

Doubling each edge of $\Gr_1$ results in a graph with Laplacian polynomial $2(2-x-x^{-1})$, which has logarithmic Mahler measure $\log 2 + m(2-x-x^{-1}) = \log 2$. The following corollary states that this is minimum nonzero complexity growth rate. 

\begin{cor} \label{absolute} (Complexity Growth Rate Gap) Let $G$ be any graph with cofinite free $\Z^d$-symmetry and Laplacian polynomial $\De$. If $m(\De) \ne 0$, then 
$$m(\De) \ge \log 2.$$
\end{cor}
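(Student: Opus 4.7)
My plan proceeds in three stages. The first is a pair of reductions. By Proposition \ref{components}, $\De(G)=\prod_i\De(G_i)$ over the component orbits, so $m(\De(G))=\sum_i m(\De(G_i))$; each summand is nonnegative, so the hypothesis $m(\De(G))\neq 0$ forces some $m(\De(G_i))>0$, and it suffices to prove the $\log 2$ bound for that orbit. I therefore assume $G$ has a single component orbit. For a connected component $C$ with $\Z^d$-stabilizer $\Lambda$, Proposition \ref{zeropoly} rules out closed components, so $\Lambda$ has positive rank $d'\ge 1$. Choosing a basis of $\Z^d$ extending a basis of $\Lambda$, the polynomial $\De(G)$ depends only on the first $d'$ variables and coincides with the Laplacian polynomial $\De(C)\in\mathcal R_{d'}$ of the $\Z^{d'}$-symmetric graph $C$. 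By the basis- and inclusion-invariance of Mahler measure (Remark (2) after Definition \ref{mahler}), $m(\De(G))=m(\De(C))$, so the problem reduces to showing the bound for a connected graph $C$ with cofinite free $\Z^{d'}$-symmetry.

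The second stage applies Theorem \ref{min} to obtain $m(\De(C))\ge m(\De(\Gr_{d'}))$. When $d'\ge 2$, Lemma \ref{nonincreasing} gives $m(\De(\Gr_{d'}))\ge m(\De(\Gr_2))$, and I establish $m(\De(\Gr_2))>\log 2$ by a direct Jensen estimate. Writing $\De(\Gr_2)(x,y)=(4-y-y^{-1})-x-x^{-1}$ and integrating in $x$ first, for $y$ on the unit circle the quantity $a:=4-y-y^{-1}\ge 2$ is real, and Jensen yields the inner integral $\log\bigl((a+\sqrt{a^2-4})/2\bigr)$. Since $(a+\sqrt{a^2-4})/2\ge a-1$ for $a\ge 2$ (squaring reduces this to $a\ge 2$), integrating in $y$ gives
\begin{equation*}
m(\De(\Gr_2))\;\ge\; m(3-y-y^{-1})\;=\;\log\frac{3+\sqrt{5}}{2}\;>\;\log 2,
\end{equation*}
which finishes the case $d'\ge 2$.

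The chief obstacle is the case $d'=1$, where $m(\De(\Gr_1))=0$ and Theorem \ref{min} alone is too weak. For this case I plan to work directly with $\De(C)\in\Z[x^{\pm1}]$, exploiting three structural features: $\De(C)$ is reciprocal, nonnegative on the unit circle (visible from the CRSF formula in Proposition \ref{poly}), and vanishes to order two at $x=1$ (first-order from connectedness of $\overline{C}$, then second-order from reciprocity). Factoring out the cyclotomic piece, I write $\De(C)=(2-x-x^{-1})\,R(x)$ with $R\in\Z[x^{\pm1}]$ palindromic and $R\ge 0$ on the unit circle, so that $m(\De(C))=m(R)$. The task becomes showing that any such $R$ arising from a Laplacian satisfies $m(R)\in\{0\}\cup[\log 2,\infty)$. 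Using Proposition \ref{poly}, I read off the leading coefficient of $R$ from the combinatorics of $\overline{C}$: whenever a maximum-monodromy loop or parallel-edge class in $\overline{C}$ appears with multiplicity at least two, the leading coefficient has absolute value at least $2$ and Jensen's formula immediately gives $m(R)\ge\log 2$. The remaining cases, in which the leading coefficient is $\pm 1$, are the most delicate: the Laplacian structure still constrains $R$ to be nonnegative on the circle with integer coefficients, and I expect to close the gap by a careful analysis of the non-unit-circle roots of $R$, comparing $C$ to an embedded or contracted copy of edge-doubled $\Gr_1$ and applying the monotonicity of complexity growth rate under edge deletion.
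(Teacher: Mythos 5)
Your stage-one reduction to a connected graph with cofinite free $\Z^{d'}$-symmetry is sound (it is essentially the paper's Lemma \ref{new} carried out at the level of the polynomial rather than the growth rate), and your stage two is correct and is actually an improvement on the paper at that point: the paper simply cites the numerical value $m(\De(\Gr_2))\approx 1.166$, whereas your Jensen estimate $m(\De(\Gr_2))\ge m(3-y-y^{-1})=\log\frac{3+\sqrt 5}{2}>\log 2$ is rigorous and self-contained. The problem is stage three. For $d'=1$ you have only a plan, not a proof, and the plan as stated cannot succeed: after writing $\De(C)=(2-x-x^{-1})R(x)$ you propose to show that \emph{any} palindromic integer Laurent polynomial $R$ that is nonnegative on the unit circle has $m(R)\in\{0\}\cup[\log 2,\infty)$. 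That general statement is false. If $L$ is Lehmer's polynomial, then $R=L(x)L(x^{-1})=|L(x)|^2$ is palindromic, integral, nonnegative on the circle, and has $m(R)=2m(L)\approx 0.3245<\log 2$. So the Laplacian structure is not a convenience here but the entire content of the hard case, and your proposal defers exactly that analysis ("I expect to close the gap by a careful analysis of the non-unit-circle roots") without supplying it. Note also that the bound $\log 2$ is attained by $\Gr_1$ with doubled edges, so whatever argument you use must be sharp; a leading-coefficient count plus soft root estimates will not get there.

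For comparison, the paper's route through the $d=1$ case is combinatorial first and analytic second. One contracts edge orbits (which never increases the complexity growth rate) to reduce to a single vertex orbit; if an orbit of parallel edges ever appears one gets $\g(G)\ge\log 2$ at once, and otherwise one reaches a loopless graph with one vertex orbit and, after discarding excess edges, exactly two edge orbits, whose Laplacian polynomial is $4-x^r-x^{-r}-x^s-x^{-s}$ with $r=1$ after a change of basis. The key analytic step is then the identity $\De=4\sin^2(\pi\theta)+4\sin^2(\pi s\theta)$ on the circle together with $u^2+v^2\ge 2uv$, which gives
\begin{equation*}
m(\De)\ \ge\ \log 8+\int_0^1\log|\sin(\pi\theta)|\,d\theta+\int_0^1\log|\sin(\pi s\theta)|\,d\theta\ =\ 3\log 2-\log 2-\log 2\ =\ \log 2,
\end{equation*}
exactly the sharp constant. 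If you want to salvage your factorization approach, you would need to first perform a reduction of this combinatorial kind so that $R$ ranges over a very restricted family (here $R=\left(4-x-x^{-1}-x^s-x^{-s}\right)/\left(2-x-x^{-1}\right)$ up to the preliminary reductions), rather than over all nonnegative palindromic integer polynomials.
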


Although $\De(\Gr_d)$ is relatively simple, the task of computing its Mahler measure is not. It is well known and not difficult to see that $m(\De(\Gr_d)) \le \log 2d$. We will use Alon's result (Theorem \ref{lowerbound}) to show that $m(\Gr_d)$ approaches $\log 2d$ asymptotically. 

\begin{theorem} \label{gridlim} (1) For every $d \ge 1$, $m(\De(\Gr_d)) \le \log 2d$.\\ 
(2) $\lim_{d \to \infty} m(\De(\Gr_d)) - \log 2d = 0.$ \end{theorem}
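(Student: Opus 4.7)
For part (1), the plan is to apply Jensen's inequality $\int \log|f|\,d\mu \le \log\int|f|\,d\mu$ to the integral defining $m(\De(\Gr_d))$. The key observation is that on the $d$-torus, writing $x_j = e^{2\pi i\theta_j}$, we have $x_j + x_j^{-1} = 2\cos(2\pi\theta_j)$, so
\begin{equation*}
\De(\Gr_d)(e^{2\pi i\theta_1},\ldots,e^{2\pi i\theta_d}) = 2d - 2\cos(2\pi\theta_1) - \cdots - 2\cos(2\pi\theta_d),
\end{equation*}
which is a \emph{nonnegative real} number. Hence $|\De(\Gr_d)| = \De(\Gr_d)$ on $\T^d$, and the $L^1$ integral collapses to the constant term $2d$ since each $\int_0^1 2\cos(2\pi\theta_j)\,d\theta_j = 0$. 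Jensen then gives $m(\De(\Gr_d)) \le \log 2d$ immediately.

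For part (2), I would use Theorem \ref{limit} to realize $m(\De(\Gr_d))$ as the limit of $\tfrac{1}{|\Z^d/\La|}\log\tau((\Gr_d)_\La)$ over subgroups $\La$ with $\langle\La\rangle\to\infty$, and bound the terms below using Alon's Theorem \ref{lowerbound}. The point is that for $\La$ with sufficiently large $\langle\La\rangle$ (say $\langle\La\rangle\ge 3$ so no generator of $\Gr_d$ is identified with its inverse or the identity), the quotient $(\Gr_d)_\La$ is a connected simple $2d$-regular graph on $|\Z^d/\La|$ vertices. Alon's inequality therefore gives
\begin{equation*}
\tau((\Gr_d)_\La) \ge [2d(1-\epsilon(2d))]^{|\Z^d/\La|},
\end{equation*}
and taking $\tfrac{1}{|\Z^d/\La|}\log$ and sending $\langle\La\rangle\to\infty$ yields $m(\De(\Gr_d)) \ge \log 2d + \log(1-\epsilon(2d))$.

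Combining with part (1), I obtain
\begin{equation*}
\log(1-\epsilon(2d)) \le m(\De(\Gr_d)) - \log 2d \le 0,
\end{equation*}
and since $\epsilon(\rho)\to 0$ as $\rho\to\infty$ (the intended reading of the statement of Theorem \ref{lowerbound}), the squeeze gives $m(\De(\Gr_d)) - \log 2d \to 0$ as $d\to\infty$.

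The main obstacle is the verification that for large enough $\langle\La\rangle$ the quotient graphs $(\Gr_d)_\La$ really are simple $2d$-regular graphs so that Alon's theorem applies without modification; this is where one must check that loops and multi-edges are excluded, but it is purely combinatorial and follows once $\langle\La\rangle$ exceeds a small constant depending on $d$. A minor secondary issue is that the statement of Theorem \ref{lowerbound} as quoted has $\epsilon(\rho)\to\infty$, which must be read as the typo for $\epsilon(\rho)\to 0$; otherwise Alon's bound is vacuous.
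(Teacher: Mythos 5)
Your proposal is correct. For part (2) you follow essentially the same route as the paper: apply Alon's bound (Theorem \ref{lowerbound}) to the connected $2d$-regular quotients $(\Gr_d)_\La$, divide by $|\Z^d/\La|$, pass to the limit via Theorem \ref{limit}, and squeeze against part (1); you are also right that the $\epsilon(\rho)\to\infty$ in the quoted statement is a typo for $\epsilon(\rho)\to 0$, and your observation that $\langle\La\rangle\ge 3$ guarantees the quotient is simple (so Alon's theorem applies verbatim) is a point the paper glosses over. Where you genuinely diverge is part (1). The paper factors out $2d$, expands $\log\bigl|1-\tfrac{1}{d}\sum_i\cos(2\pi\theta_i)\bigr|$ as a power series, and observes that the odd-degree terms integrate to zero by symmetry, leaving $m(\De(\Gr_d))=\log 2d$ minus a sum of manifestly nonnegative even-moment integrals. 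You instead note that $\De(\Gr_d)$ is nonnegative on the torus with $L^1$-norm exactly $2d$ and invoke Jensen's inequality $\int\log|f|\le\log\int|f|$. Your argument is shorter and is presumably what the authors mean by ``well known and not difficult to see''; the paper's series expansion buys an explicit series representation of the deficit $\log 2d-m(\De(\Gr_d))$, which is what the remark following the theorem refers to when it discusses the difficulty of estimating that series directly. Both arguments are sound.
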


Asymptotic results about the Mahler measure of certain families of polynomials have been obtained elsewhere. However, the graph theoretic methods that we employ to prove Theorem \ref{min} are different from techniques used previously.

\section{Algebraic dynamical systems and proofs.}\label{proofs}

We review some of the ideas of algebraic dynamical systems found in \cite{LSW90} and \cite{Sc95}.

For any finitely generated module $M$ over $\Rd$, we can consider the Pontryagin dual $\widehat M= {\rm Hom}(M, \T)$, where $\T$ is the additive circle group $\R/\Z$. We regard $M$ as a discrete space. Endowed with the compact-open topology, $\widehat M$ is a compact $0$-dimensional space. Moreover, the module actions of $x_1, \ldots, x_d$ determine commuting homeomorphisms $\si_1, \ldots, \si_d$ of $\widehat M$. Explicitly, $(\si_j \rho)(a) = \rho(x_j a)$ for every $a \in M$. Consequently, $\widehat M$ has a $\Z^d$-action $\si: \Z^d \to \text{Aut}(\widehat M)$. We will regard monomials $x^\s$ as acting on $\widehat M$ by $\si(\s)$ .

The pair $(\widehat M, \si)$ is an \emph{algebraic dynamical system}. It is well defined up to \emph{topological conjugacy}; that is, up to a homeomorphism of $\widehat M$ respecting the $\Z^d$ action. In particular its periodic point structure is well defined.  

\emph{Topological entropy} $h(\si)$ is another well-defined quantity associated to $(\widehat M, \si)$. (See \cite{LSW90} or \cite{Sc95} for the definition.) When $M$ can be presented by a square matrix $A$ with entries in $\Rd$, topological entropy can be computed as the logarithmic Mahler measure $m(\det A)$.

For any subgroup $\La$ of $\Z^d$, a $\La$-\emph{periodic point} is an element that is fixed by every $x^\s \in \La$. The set of all $\La$-periodic points is denoted by ${\rm Per}_\La(\si)$. It is a finitely generated abelian group isomorphic to ${\rm Hom}(T(M/\La M), \T)$, the Pontryagin dual of the torsion subgroup of $M/\La M$. The group consists of $|T(M/\La M)|$ tori of dimension equal to the rank of $M/\La M$.

We apply the above ideas to graphs $G$ with cofinite free $\Z^d$-symmetry. As in \cite{LSW14}, define the \emph{coloring module} $C$ to be the finitely presented module over the ring $\Rd$ with presentation matrix equal to the $n \times n$ Laplacian matrix $L$ of $G$. The Laplacian polynomial $\De$ arises as the $0$th elementary divisor of $C$.

Let $\La$ be a finite-index subgroup of $\Z^d$, and consider the $r$-sheeted covering graph $G_\La$. It has finitely many connected components. We denote by $n_\La$ the product of the cardinality of the vertex sets of the components. If $G$ is connected, then $n_\La= |\o V||\Z^d/\La|$. 

As in \cite{Sc95}, let $$\Omega(\La) = \{{\bf c}= (c_1, \ldots, c_d) \in \Co^d \mid {\bf c}^{\bf n} = 1\ \forall\ {\bf n} =(n_1, \ldots, n_d)\in \La\}.$$

The following combinatorial formula for the complexity $\tau(G_\La)$ is motivated by \cite{Kr78}. It is similar to the formula on page 621 of \cite{LSW90} and also page 191 of \cite{Sc95}. The proof here is relatively elementary. 

\begin{prop} \label{productformula} Let $G$ be a graph with cofinite free $\Z^d$-symmetry.  Let $\La$ be a finite-index subgroup of $\Z^d$. If $\De$ is the Laplacian polynomial of $G$, then \begin{equation}\label{formula} T(G_\La) =\frac{1}{n_\La} \prod_{    {(c_1, \ldots, c_d) \in \Omega(\La)\cap \S^d}\atop {\De(c_1, \ldots, c_d) \ne0}   } |\De(c_1, \ldots, c_d)|.
\end{equation}

\end{prop}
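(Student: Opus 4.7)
The plan is to apply Corollary \ref{product} to the finite cover $G_\La$ and to decompose the spectrum of its Laplacian $L_{G_\La}$ via the action of the deck group $\Z^d/\La$.

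First I would identify the vertex set of $G_\La$ with $\overline V \times \Z^d/\La$, on which $\Z^d/\La$ acts by translations in the second factor, commuting with $L_{G_\La}$. Pontryagin duality for the finite abelian group $\Z^d/\La$ then gives an orthogonal decomposition $\Co^{V(G_\La)} = \bigoplus_c V_c$, where $c$ ranges over the characters of $\Z^d/\La$, identified bijectively with the points $(c_1, \ldots, c_d) \in \Omega(\La) \cap \S^d$. Each $V_c$ is $n$-dimensional, spanned by the twisted vectors $e_i \otimes (c^\sigma)_{\sigma \in \Z^d/\La}$ for $i = 1, \ldots, n$. A direct calculation shows that on $V_c$ the operator $L_{G_\La}$ restricts to the $n \times n$ matrix $L(c) = L(c_1, \ldots, c_d)$ obtained by substituting $x_j \mapsto c_j$ in the Laurent polynomial matrix $L$. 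Because $c \in \S^d$, this matrix is Hermitian and positive semidefinite, with $\det L(c) = \De(c_1, \ldots, c_d)$.

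It follows that the spectrum of $L_{G_\La}$ with multiplicity is the disjoint union $\bigsqcup_c \mathrm{spec}(L(c))$. Applying Corollary \ref{product} gives
\[
T(G_\La) = \frac{1}{n_\La} \prod_{\lambda > 0,\ \lambda \in \mathrm{spec}(L_{G_\La})} \lambda = \frac{1}{n_\La} \prod_c \bigl(\text{product of positive eigenvalues of } L(c)\bigr).
\]
For each character $c$ with $\De(c) \ne 0$, the matrix $L(c)$ is invertible, so all $n$ of its eigenvalues are positive and their product equals $\det L(c) = \De(c) = |\De(c)|$. These are exactly the factors appearing on the right-hand side of the stated identity.

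The main obstacle is controlling the contribution from the exceptional characters $c$ with $\De(c) = 0$: the block $L(c)$ is singular but can still contribute positive eigenvalues to $L_{G_\La}$, and for the formula to hold as an exact identity those extra factors must be absorbed by $n_\La$. I would attack this by matching $\sum_c \dim \ker L(c)$ with the number of connected components of $G_\La$, so that Corollary \ref{product} is invoked with all zero eigenvalues correctly accounted, and then relating the pseudo-determinants of the singular blocks to the product $\prod_i |V(G^{(i)}_\La)|$ defining $n_\La$ via Kirchhoff's theorem applied within each component orbit. This matching is automatic in the case $n = 1$, such as $G = \Gr_d$, because then $L(1, \ldots, 1)$ is the $1 \times 1$ zero matrix and contributes an empty product.
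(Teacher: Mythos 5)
Your decomposition is essentially the paper's own: the paper substitutes commuting Kronecker products of cyclic permutation matrices for the variables and simultaneously diagonalizes them, which is exactly your character decomposition of $\Co^{V(G_\La)}$ under the deck group $\Z^d/\La$. Both routes reduce the spectrum of $L_{G_\La}$ to the multiset union of the spectra of the $n\times n$ blocks $L(c)$, $c\in\Omega(\La)\cap\S^d$, and both then invoke Corollary \ref{product}. Up to the treatment of the nonsingular blocks your argument is correct and complete.

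The obstacle you flag at the end, however, is a genuine gap, and the repair you propose cannot succeed: the factor $n_\La$ is already fully consumed as the denominator in Corollary \ref{product} (it is precisely the product of the orders of the vertex sets of the components of $G_\La$), so nothing remains to absorb the nonzero eigenvalues of the singular blocks. Those eigenvalues genuinely enter the product of Corollary \ref{product}, and the identity as stated fails whenever some $L(c)$ with $\det L(c)=0$ is not the zero matrix. Concretely, take $d=1$ and let $G$ be the subdivision of $\Gr_1$ with two vertex orbits $a_s,b_s$ and edges $a_sb_s$ and $b_sa_{s+1}$, so that
$$L=\begin{pmatrix} 2 & -1-x^{-1}\\ -1-x & 2\end{pmatrix}, \qquad \De(G)=2-x-x^{-1}.$$
For $\La=2\Z$ the quotient $G_\La$ is the $4$-cycle, so $T(G_\La)=4$ and $n_\La=4$, while the right-hand side of (\ref{formula}) is $\tfrac14|\De(-1)|=1$; the missing factor is exactly the nonzero eigenvalue $4$ of the singular block $L(1)$. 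The correct exact identity replaces $|\De(c_1,\ldots,c_d)|$ by the product of the \emph{nonzero} eigenvalues of $L(c_1,\ldots,c_d)$, taken over all $c\in\Omega(\La)\cap\S^d$, and this reduces to (\ref{formula}) precisely when every singular block vanishes identically (for instance when $n=1$, as you note for $\Gr_d$). The paper's own proof elides the same point; since each singular block contributes a factor bounded by $(2\d)^n$ and the number of singular $c$ is negligible compared with $|\Z^d/\La|$, the discrepancy is subexponential and Theorem \ref{limit} is unaffected, but as an exact formula Proposition \ref{productformula} needs the amended right-hand side.
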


\begin{proof} Since $\La$ has finite index in $\Z^d$,  there exist positive integers $r_1, \ldots, r_d$ such that  $\Z^d/\La \cong \Z/(r_1) \oplus \cdots \oplus \Z/(r_d)$. We can choose a basis $u_1, \ldots, u_d$ for $\Z^d$ such that the coset of $u_i$ generates $\Z/(r_i)$ (Theorem VI.4 of \cite{Ro95} can be used). Let $\De' = \De(u_1, \cdots, u_d)$. Equation \ref{formula} becomes:

 \begin{equation}\label{formula2} T(G_\La) = \frac{1}{n_\La} \prod_{    {(\omega_1^{r_1}, \ldots, \omega_d^{r_d})=(1, \ldots, 1)}\atop {\De'(\omega_1, \ldots, \omega_d) \ne0}   } |\De'(\omega_1, \ldots, \omega_d)|.
\end{equation}

Let $P_{r_i}$ denote the $r_i \times r_i$ permutation matrix corresponding to the 
cycle $(1, 2, \ldots, r_i)$. With respect to the basis $u_1, \ldots, u_d$, the Laplacian matrix $L_\La$ for $G_\La$ can be obtained from the Laplacian matrix $L$ for $G$ by replacing 
each variable $u_i$ with the $r\times r$ tensor (Kronecker) product $U_i = I_1 \otimes \cdots I_{i-1} \otimes P_{r_i}\otimes I_{i+1} \otimes \cdots I_d$. Here $I_1, \cdots, I_d$ denote identity matrices of sizes $r_1 \times r_1, \ldots, r_d \times r_d$, respectively. Any scalar $c$ is replaced with $c$ times the $r \times r$ identity matrix. We regard  $L_\La$ as a block matrix with blocks of size $r \times r$. 

By elementary properties of tensor product, the matrices $U_i$ commute. Hence the blocks of the characteristic matrix $\l I - L_\La$ commute. The main result of \cite{KSW99} implies that the determinant of $\l I- L_\La$ can be computed by treating the blocks as entries in a $d \times d$ matrix, computing the determinant, which is a single $r \times r$ matrix $D$, and finally computing the determinant of $D$. 

The matrix $D$ is simply the Laplacian polynomial $\De'(U_1, \ldots, U_d)$. The matrices $U_i$ can be simultaneously diagonalized. For each $i$, let  $v_{i, 1}, \ldots, v_{i, r_i}$ be a basis of eigenvectors for 
$P_i$ with corresponding eigenvalues the $r_i$th roots of unity. Then 
$\{v_{1, j_1} \otimes \cdots \otimes v_{d, j_d} \mid 0 \le j_i < r_i\}\subset \Co^d$ is a basis of eigenvectors for $D$. With respect to such a basis,  $D$ is a diagonal matrix with diagonal entries  $\De'(\w_1, \ldots, \w_d)$, where $\w_i$ is any $r_i$th root of unity.   Using Corollary \ref{product} and changing variables back, the proof is complete. 

\end{proof}


\ni {\it Proof of Theorem \ref{limit}.} We must show that 
$$ \lim_{\langle \La \rangle \to \infty} \frac{1}{|\Z^d/\La|} \log T(G_\La)$$  exists and is equal to $m(\De)$ where $\De$ is the Laplacian polynomial of $G$. Consider the formula (\ref{formula}) for $T(G)$ given by Proposition \ref{productformula}. We will prove shortly that  
$$\lim_{\langle \La \rangle \to \infty} \frac{1}{|\Z^d/\La|} \log n_\La =0.$$ 
Assuming this, it suffices  to show that \begin{equation}\label{rsum} \lim_{\langle \La \rangle \to \infty} \frac{1}{|\Z^d/\La|} \log \prod |\De(c_1, \ldots, c_d)|= \lim_{\langle \La \rangle \to \infty} \frac{1}{|\Z^d/\La|} \sum \log |\De(c_1, \ldots, c_d)|= m(\De).\end{equation} Here the product and sum are over all  $d$-tuples $(c_1, \ldots, c_d)\in \Omega(\La)\cap \S^d$ such that $\De(c_1, \ldots, c_d) \ne 0$. By a unimodular change of basis, as in the proof of Proposition \ref{productformula}, we see that the second expression in (\ref{rsum}) is a Riemann sum for $m(\De)$. The contribution of vanishingly small members of the partition that contain zeros of $\De$ can be made arbitrarily small (see pages 58--59 of \cite{EW99}).   Hence the Riemann sums converge to $m(\De)$. 

It remains to show that $\lim_{\langle \La \rangle \to \infty} \frac{1}{|\Z^d/\La|} n_\La =0.$ For this it suffices to assume that $G$ is the $\Z^d$ orbit of a single, unbounded component. Then $G_\La$ is also the orbit of a single component $G_0$. It is stabilized by some nonzero element $w \in \Rd$. The cardinality $|V(G_0)|$ is at least as large as the cardinality of the orbit of the identity in $\Z^d/\La$ under translation by $w$. The line through the origin in the direction of $w$ intersects the fundamental region of $\Z^d/\La$ in a segment of length at least as $\langle \La \rangle$. Hence the cardinality of the orbit of the origin under $w$ is at least $\langle \La \rangle/|w|$. From this we conclude that $$|V(G_0)| \ge \frac{\langle \La \rangle}{|w|}.$$

To complete the argument, let $N= |\o V||\Z^d/\La|$  denote the number of vertices in $G_\La$. Let $k$ be the number of connected components of $G_\La$. Since the components are graph isomorphic (by the induced $\Z^d$ action), $n_\La$ is equal to $(N/k)^k$. Now
$$\lim_{\langle \La \rangle \to \infty} \frac{1}{|\Z^d/\La|} \log n_\La =\lim_{\langle \La \rangle \to \infty} \frac{1}{|\Z^d/\La|} \log \bigg(\frac{N}{k} \bigg)^k.$$ Letting $s= N/k$, the number of vertices in each component, we have
$$\lim_{\langle \La \rangle \to \infty} \frac{1}{|\Z^d/\La|} \log (s)^{\frac{N}{s}}= |\o V| \lim_{\langle \La \rangle \to \infty}  \frac{\log s}{s}.$$ The last limit is zero since $s$ must grow without bound. 
\qed \\

Now suppose $H$ is a subgraph of $G$ consisting of one or more connected components of $G$, such that the orbit of $H$ under $\Z^d$ is all of $G$.  Let 
$\Gamma < \Z^d$ be the stabilizer of $H$.  Then $\Gamma\cong \Z^{d'}$ for some $d'\le d$, and its action on $H$ can be regarded as a cofinite free action of $\Z^{d'}$.   Its complexity growth rate is given by 
$$\g(H)=\lim_{\<\La\>\to\infty} \frac{1}{|\Gamma/\La|} \log T(H_\La)$$
where $\La$ ranges over finite-index subgroups of $\Gamma$.

\begin{lemma}\label{new} Under the above conditions we have $\g(G)=\g(H)$.
\end{lemma}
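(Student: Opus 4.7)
The plan is to show that $G_\La$ decomposes as a disjoint union of finitely many isomorphic copies of $H_{\La \cap \Gamma}$, after which the lemma will follow by applying Theorem \ref{limit} to $H$ in place of $G$. For clarity I would first carry out the argument assuming $H$ is a single connected component of $G$ with stabilizer $\Gamma \cong \Z^{d'}$; the several-component case requires only extra bookkeeping. Note that since $H/\Gamma$ injects into the finite graph $\o G$, $H$ indeed has cofinite free $\Z^{d'}$-symmetry, and since $H$ is connected Proposition \ref{zeropoly} guarantees its Laplacian polynomial is nonzero, so Theorem \ref{limit} is available for $H$.

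The key structural step is the following. Because $\Z^d \cdot H = G$ and $\Z^d$ is abelian, every component of $G$ has the form $\gamma H$ with stabilizer exactly $\Gamma$, so the set of components of $G$ is in bijection with $\Z^d/\Gamma$. For a finite-index subgroup $\La \subset \Z^d$, the action of $\La$ on this set of components has $k := |\Z^d/(\La + \Gamma)|$ orbits, each with point-stabilizer $\La \cap \Gamma$. Quotienting an $\La$-orbit of components by $\La$ therefore produces a connected graph isomorphic to $H/(\La \cap \Gamma) = H_{\La \cap \Gamma}$, which gives
\[ G_\La \cong \bigsqcup_{i=1}^{k} H_{\La \cap \Gamma}, \qquad T(G_\La) = T(H_{\La \cap \Gamma})^{k}. \]

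The rest is counting and a limit. From $(\La + \Gamma)/\La \cong \Gamma/(\La \cap \Gamma)$ one obtains $|\Z^d/\La| = k \cdot |\Gamma/(\La \cap \Gamma)|$, so the factors of $k$ cancel:
\[ \frac{1}{|\Z^d/\La|} \log T(G_\La) = \frac{1}{|\Gamma/(\La \cap \Gamma)|} \log T(H_{\La \cap \Gamma}). \]
Any nonzero vector in $\La \cap \Gamma$ is a nonzero vector in $\La$, so $\langle \La \cap \Gamma \rangle \ge \langle \La \rangle$. Hence as $\langle \La \rangle \to \infty$ in $\Z^d$, the subgroup $\La \cap \Gamma$ has index going to infinity in $\Gamma$ and its shortest vector going to infinity; Theorem \ref{limit} applied to $H$ then shows the right-hand side converges to $\gamma(H)$, giving $\gamma(G) = \gamma(H)$.

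The main obstacle is the structural identification in the middle step, i.e., showing that $G_\La$ really is a disjoint union of copies of $H_{\La \cap \Gamma}$ and not something more complicated. Abelian-ness of $\Z^d$ makes the single-component case clean, but when $H$ has several components one must verify that elements of $\Gamma$ may permute those components nontrivially, while elements of $\La$ do not mix components lying in different $\Z^d$-orbits, so that the quotient still splits into the same number of copies of $H_{\La \cap \Gamma}$ with the counting identities intact.
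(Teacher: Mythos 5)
Your proposal is correct and follows essentially the same route as the paper: decompose $G_\La$ into $k$ disjoint translates of $H_{\La\cap\G}$, deduce $T(G_\La)=T(H_{\La\cap\G})^k$ with $|\Z^d/\La|=k\,|\G/(\La\cap\G)|$ so the $k$'s cancel in the normalized logarithm, and note that $\langle\La\cap\G\rangle\to\infty$ as $\langle\La\rangle\to\infty$. The only cosmetic difference is that you identify $k$ as $|\Z^d/(\La+\G)|$ by counting $\La$-orbits of components, whereas the paper gets the same count by comparing covering degrees over $\o G\cong\o H$, which handles the case of an $H$ with several components without the extra bookkeeping you flag.
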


\begin{proof}
Let $\La$ be any finite-index subgroup of $\Z^d$.  Then $H$ is invariant under $\La\cap\G$. The image of $H$ in the quotient graph $G_\La$ is isomorphic to  $H_{\La\cap\G}$.  

Note that the quotient $\overline H$ of $H$ by the action of $\G$ is isomorphic to $\overline G$, since the $\Z^d$ orbit of $H$ is all of $G$.  Since $G_\La$ is a $|\Z^d /\La|$-fold cover of $\overline G$ and $H_{\La\cap\G}$ is a $|\G/(\La\cap\G)|$-fold cover of $\overline H$, $G_\La$ comprises $k=|\Z^d /\La| / |\G/(\La\cap\G)|$ mutually disjoint translates of a graph that is isomorphic to $H_{\La\cap\G}$.  Hence $T(G_\La)=T(H_{\La\cap\G})^k$ and
$$\frac{1}{|\Z^d /\La|} \log T(G_\La)= \frac{1} {|\G/(\La\cap\G)|} \log T(H_{\La\cap\G}).$$
Since $\<\La\cap\G\>\to\infty$ as $\<\La\>\to\infty$, we have $\g(G)=\g(H)$.
\end{proof}

\noindent {\it Proof of Theorem \ref{min}.}  
By Proposition \ref{components}, we may assume that $G$ is the orbit of a single connected component $H$.  Since $G$ has finitely many components, the stabilizer $\G$ of $H$ is isomorphic to $\Z^d$ and has a cofinite free action on $H$, with $\g(G)=\g(H)$ by Lemma \ref{new}.  Thus we can assume $G$ is connected.

Consider the case in which $G$ has a single vertex orbit. Then for some $u_1,\ldots, u_m\in \Z^d$, the edge set $E$ consists of edges from $v$  to $u_i \cdot v$ for each $v\in V$ and $i=1,\ldots,m$.  Since $G$ is connected, we can assume after relabeling that $u_1, \ldots, u_d$ generate a finite-index subgroup of $\Z^d$. Let $G'$ be the be the $\Z^d$-invariant subgraph of $G$ with edges from $v$  to $u_i \cdot v$ for each $v\in V$ and $i=1,\ldots,d$.  Then $G'$ is the orbit of a subgraph of $G$ that is isomorphic to $\Gr_d$, and so by Lemma \ref{new}, $\g(\Gr_d)=\g(G')\le\g(G)$.

We now consider a connected graph $G$ having vertex families $v_{1, \s}, \ldots, v_{n, \s}$, where $n >1$. Since $G$ is connected, there exists an edge $e$ joining $v_{1, {\bf 0}}$ to some $v_{2, \s}$. Contract the edge orbit $\Z^d \cdot e$ to obtain a new graph $G'$ having cofinite free $\Z^d$-symmetry and complexity growth rate no greater than that of $G$.
Repeat the procedure with the remaining vertex families so that only $v_{1, \s}$ remains. The proof in the previous case of a graph with a single vertex orbit now applies. \qed \\


\noindent {\it Proof of Lemma \ref{nonincreasing}.}  Consider the grid graph $\Gr_d$. Deleting all edges 
in parallel to the $d$th coordinate axis yields a subgraph $G$ consisting of countably many mutually disjoint translates of $\Gr_{d-1}$. By Lemma \ref{new},   $m(\De(\Gr_{d-1}))=m(\De(G))  \le m(\De(\Gr_d))$. 
\qed \\

\noindent {\it Proof of Corollary \ref{absolute}.}  By Proposition \ref{components} and Lemma \ref{new}, it suffices to consider a connected graph $G$ with cofinite free $\Z^d$-symmetry and $m(\De(G))$ nonzero. Note that $m(\Gr_1) =0$ while $m(\Gr_2) \approx 1.166$ is greater than $\log 2$.  By Theorem \ref{min} and Lemma \ref{nonincreasing} we can  assume that $d=1$. 

If $G$ has an orbit of parallel edges, we see easily that $\g(G)\ge \log 2$.  Otherwise, we proceed as in the proof of  Theorem \ref{min}, contracting edge orbits to reduce the number of vertex orbits  without increasing the complexity growth rate.  If at any step we obtain an orbit of parallel edges, we are done; otherwise we will obtain a graph $G'$ with a single vertex orbit and no loops. 
If $G'$ is isomorphic to $\Gr_1$ then $G$ must be a tree; but then $m(\De(G))=\g(G)=0$, contrary to our hypothesis. So $G'$ must have at least two edge orbits.  
Deleting excess edges, we may suppose $G'$ has exactly two edge orbits. 

The Laplacian polynomial $m(\De(G'))$ has the form $4-x^r-x^{-r}- x^s-x^{-s}$, for some positive integers $r, s$. Reordering the vertex set of $G'$, we can assume without loss of generality that $r=1$. The following calculation is based on an idea suggested to us by Matilde Lalin.

$$m(\De(G')) = \int_0^1 \log \vert 4- 2 \cos( 2 \pi \theta)-2 \cos(2 \pi s \theta) \vert \ d\theta$$

$$=\int_0^1 \log \vert  2(1-\cos  (2 \pi \theta)) + 2(1 - \cos(2 \pi s \theta)) \vert \ d\theta$$

$$=\int_0^1 \log  \bigg( 4\sin^2(\pi \theta) + 4\sin^2(\pi s \theta) \bigg) \ d\theta.$$

\ni Using the inequality $(u^2+v^2) \ge 2 u v$, for any nonnegative $u, v$, we have:

$$m(\De(G')) \ge \int_0^1 \log \bigg( 8 \vert \sin( \pi \theta)\vert\  \vert \sin( \pi s \theta) \vert \bigg)\ d\theta$$

$$= \log 8 + \int_0^1 \log \vert \sin(  \pi \theta) \vert \ d\theta + \int_0^1 \log \vert \sin( \pi s \theta) \vert \ d\theta$$

$$= \log 8 + \int_0^1 \log \sqrt{\frac{1-\cos(2 \pi \theta)}{2}}\ d \theta + \int_0^1 \log \sqrt{\frac{1-\cos(2  \pi s\theta)}{2}}\ d \theta$$

$$=\log 8 + \int_0^1 \frac{1}{2} \log \bigg(\frac{2 - 2 \cos(2 \pi \theta)}{4}\bigg)\ d \theta +   \int_0^1 \frac{1}{2} \log \bigg(\frac{2 - 2 \cos(2 \pi s \theta)}{4}\bigg)\ d \theta$$

$$= \log 8 + \frac{1}{2}m(2 - x -x^{-1}) -\frac{1}{2} \log 4 + \frac{1}{2}m(2 - x^s - x^{-s})- \frac{1}{2} \log 4$$

$$= 3\log 2 + 0 - \log 2 +0 -\log 2 = \log 2.$$
\qed\\

\noindent{\it Proof of Theorem \ref{gridlim}.} (1) The integral representing the logarithmic Mahler measure of $\De(\Gr_d)$ can be written
$$\int_0^1 \cdots \int_0^1 \log \bigg\vert2d - \sum_{i=1}^d 2 \cos(2 \pi \theta_i)\bigg\vert d\theta_1\cdots d\theta_d$$
$$= \log 2d + \int_0^1 \cdots \int_0^1 \log \bigg\vert1+\sum_{i=1}^d \frac{ \cos( 2 \pi \theta_i)}{d}\bigg\vert d\theta_1\cdots d\theta_d$$
$$= \log 2d + \int_0^1 \cdots \int_0^1 -\sum_{k=1}^\infty \frac{(-1)^k}{k}\bigg( \frac{\sum_{i=1}^d \cos(2 \pi \theta_i)}{d} \bigg)^k d\theta_1\cdots d\theta_d.$$
By symmetry, odd powers of $k$ in the summation contribute zero to the integration. Hence 
$$m(\De(\Gr_d)= \log 2d - \int_0^1 \cdots \int_0^1 \sum_{k=1}^\infty \frac{1}{2k}\bigg( \frac{\sum_{i=1}^d \cos(2 \pi \theta_i)}{d} \bigg)^{2k} d\theta_1\cdots d\theta_d \le \log 2d.$$

(2) Let $\La$ be a finite-index subgroup of $\Z^d$. Consider the quotient graph $(\Gr_d)_\La$. The cardinality of its vertex set is $|\Z^d/\La|$. The main result of \cite{Al90}, cited above as Theorem \ref{lowerbound}, implies that 
$$\t((\Gr_d)_\La) = \bigg((2 d) (1 - \mu(d))\bigg)^{|\Z^d/\La|},$$
where $\mu$ is a nonnegative function such that $\lim_{d\to \infty} \mu(d) =0$.  
Hence 
$$\lim_{d \to \infty}  \bigg (\frac{1}{|\Z^d/\La|} \log \t((\Gr_d)_\La - \log 2 d\bigg ) = \lim_{d\to \infty} \log (1-\mu(d)) =0.$$
Theorem \ref{limit} completes the proof.  
\qed \\

\begin{remark} One can evaluate $m(\De(\Gr_d))$ numerically and obtain an infinite series representing
$m(\De(\Gr_d)) - \log 2d$. However, showing rigorously that the sum of the series approaches zero as $d$ goes 
to infinity appears to be difficult. (See \cite{SW00}, p. 16 for a heuristic argument.) 
\end{remark}

\bigskip

\ni Department of Mathematics and Statistics,\\
\ni University of South Alabama\\ Mobile, AL 36688 USA\\
\ni Email: \\
\ni  silver@southalabama.edu\\
\ni swilliam@southalabama.edu
\end{document}